\newcommand \bel {\begin{equation}\label}
\newcommand \ee {\end{equation}}
\newcommand \be {\begin{equation}}
\newcommand \RR {\mathbb R}
\newcommand \CC {\mathbb C}
\newcommand \LL {\mathbb L}
\newcommand \NN {\mathbb N}
\newcommand \del \partial
\newcommand \Bcal {\mathcal B}
\newcommand \Rcal {\mathcal R}
\newcommand \Lcal {\mathcal L}
\newcommand \Fcal {\mathcal F}
\newcommand \bei {\begin{itemize}}
\newcommand \eei {\end{itemize}}
\def\pdt{\partial_t}
\def \eps {\varepsilon}
\newtheorem{theorem}{\color{black}\indent Theorem}[section]
\newtheorem{lemma}{\color{black}\indent Lemma}[section]
\newtheorem{proposition}{\color{black}\indent Proposition}[section]
\newtheorem{remark}{\color{black}\indent Remark}[section]
\begin{document}
\large
\title{\bf Global stability dynamics of the timelike extremal hypersurfaces in Minkowski space}
\author{Weiping Yan \footnote{
E-mail:\, {\tt yanwp@gxu.edu.cn (W. Yan)},
{\tt 2107401015@st.gxu.edu.cn} (W.J. Li). Corresponding author: Weiping Yan.},~~Weijia Li\\
\footnotesize College of Mathematics and Information Science, Guangxi University, Nanning 530004, P.R. China.\\

}

\date{17 December, 2021}

\maketitle
\begin{abstract}
This paper aims to study the relationship between the timelike extremal hypersurfaces and the classical minimal surfaces. This target also gives the long time dynamics of timelike extremal hypersurfaces in Minkowski spacetime $\mathbb{R}^{1+M}$ with the dimension $2\leq M\leq7$.
In this dimension, the stationary solution of timelike extremal hypersurface equation is the solution of classical minimal surface equation
$$
{\rm div}\left(\frac{\nabla u}{\sqrt{1+|\nabla u|^2}}\right)=0, \quad \forall x\in\RR^M,
$$
 which only admits the hyperplane solution by Bernstein theorem.
We prove that this hyperplane solution as the stationary solution of timelike extremal hypersurface equation is asymptotic stablely by finding the hidden dissipative structure of linearized equation.
Here we  overcome that the vector field method (based on the energy estimate and bootstrap argument) is lose effectiveness due to the lack of time-decay of solution for the linear perturbation equation. Meanwhile, a global well-posed result of
linear damped wave with variable time-space coefficients is established. Hence, our result construct a unique global timelike non-small solution near the hyperplane.\\

 \smallskip\noindent
{\bf Key Words}: Timelike minimal surface;  The hyperplane; Asymptotic stability\\

\smallskip\noindent
{\bf 2010 Mathematics Subject Classification}:~~53A10; 37L15; 35L05; 35A01;35B35
\end{abstract}



\section{Introduction and main results}

\subsection{Introduction}
Let $\mathcal{M}$ be a timelike $(M+1)$-dimensional hypersurface, and $(\mathbb{R}^{D},g)$ be a $D$-dimensional Minkowski space, and $g$ be the Minkowski metric with $g(\partial_t,\partial_t)=1$. At any time $t$, the spacetime volume in $\mathbb{R}^{D}$ of timelike hypersurface $\mathcal{M}$ can be described as a graph over $\mathbb{R}^M$, which satisfies
\begin{equation}\label{E1-0}
\mathcal{S}(u)=\int_{\mathbb{R}}\int_{\mathbb{R}^{M}}\sqrt{1-|\partial_t u|^2+|\nabla u|^2}d^Mxdt.
\end{equation}
Critical points of action integral (\ref{E1-0}) give rise to submanifolds $\mathcal{M}\subset\mathbb{R}^{D}$ with
vanishing mean curvature, i.e. timelike extremal hypersurfaces. The Euler-Lagrange equation of (\ref{E1-0}) is
\begin{equation}\label{ENNN1-1}
\partial_t\left(\frac{\partial_t u}{\sqrt{1-|\partial_t u|^2+|\nabla u|^2}}\right)-{\rm div}\left(\frac{\nabla u}{\sqrt{1-|\partial_t u|^2+|\nabla u|^2}}\right)=0,
\end{equation}
which admits an exact scaling invariance
$$
u(t,x)\mapsto u_{\lambda}(t,x)=\lambda u(\lambda^{-1}t,\lambda^{-1}x),\quad for \quad any \quad constant \quad \lambda>0,
$$
and it is a mass conservation dynamics, i.e.
$$
\int_{\RR}\left(\frac{\partial_t u}{\sqrt{1-|\partial_t u|^2+|\nabla u|^2}}\right)dx_i~is~conserved~along~the~dynamics,
$$
and one can see that the stationary equation of it is the minimal surface equation
\begin{equation}\label{0ENNN1-1}
{\rm div}\left(\frac{\nabla u}{\sqrt{1+|\nabla u|^2}}\right)=0, \quad \forall x\in\RR^M.
\end{equation}
Bernstein conjecture that the solution of (\ref{0ENNN1-1}) is a linear function in its variables.
 Simons \cite{Simon} proved it is true the for dimension $M\leq 7$.
The famous Bombieri-De Giorgi-Giusti minimal graph \cite{BGG} gives a counter-example for the dimension $M=8$, which also disproves the Bernstein conjecture for all dimension $M\geq8$.
Thus three nature questions are arisen in the timelike extremal hypersurfaces theory:\\

(i) Is the hyperplane as the solution of the minimal surface equation (\ref{0ENNN1-1}) stable for the timelike extremal hypersurfaces equation (\ref{ENNN1-1}) in the dimension $2\leq M\leq 7$?\\

(ii) If the dimension $M\geq8$, are there solutions of the timelike extremal hypersurfaces equation (\ref{ENNN1-1}) convergence to \textbf{non-planar} solutions of the minimal surface equation (\ref{0ENNN1-1}) as the time $t$ goes to $+\infty$?\\

(iii) Is the stationary solution of the timelike extremal hypersurfaces equation (\ref{ENNN1-1}) stable in some function spaces under stochastic perturbations?  \\

The timelike minimal surface equation arises in string theory and geometric minimal surfaces theory in Minkowski space.
The global regularity of this equation with the small intial data has been widely studied,
one can see the related results in \cite{Ba,Kong,Lin,Mi,Yan1} for the related nonlinear wave equations. For the large initial data,
there has been discovered that the behavior of string theory in spacetimes that develop singularities \cite{W}. Meanwhile,
the study of singularity is one of most important topics in physics and mathematics theory, which corresponds to a physical event. It can also imply that some essential physics is missing from the equation in question, which should thus be supplemented with additional terms.
For the classicification of solutions in physics, there are the timlike solution, the spacelike solution and the lightlike (null) solution. To the equation (\ref{E1-1}), if the solution $u(t,x)$ of it satisfies
$1+|\nabla u|^2-u_t^2>0$, then it is called the timelike solution; if the solution $u(t,x)$ of it satisfies
$1+|\nabla u|^2-u_t^2<0$, then it is called the spacelike solution; if it holds
$1+|\nabla u|^2-u_t^2=0$,  then it is called the lightlike solution. Eggers \& Hoppes \cite{Hop1,Hop2} gave a detailed discussion on the existence of lightlike singularity for the Born-Infeld equation
 (or called relavisitive string equation)
$$
u_{tt}(1+u_x^2)-u_{xx}(1-u_t^2)=2u_tu_xu_{tx},\quad (t,x)\in\RR^+\times\RR,
$$
which is one dimensional case of timelike minimal surface equation (\ref{ENNN1-1}).
They showed that it admits lightlike self-similar blowup solutions
\begin{equation*}
u(t,x)=u_0-\hat{t}+\hat{t}^{a}h(\frac{x}{\hat{t}^b})+\ldots,
\end{equation*}
where $\hat{t}=t_0-t$ and $h(x)\varpropto A_{\pm}x^{\frac{2a}{a+1}}$ for $x\rightarrow\pm\infty$. In higher dimension case, they showed that the radially symmetric membranes equation admits self-similar solutions
\begin{equation*}
u(t,x)=-\hat{t}+\hat{t}^{a}h(\frac{x-x_0}{\hat{t}^b})+\ldots,
\end{equation*}
by analyzing the eikonal equation
\begin{equation*}
1-u_t^2+u_x^2=0.
\end{equation*}
Meanwhile, the swallowtail singularity was also been given by parametric string solution in \cite{Egg}.
Yan \cite {Yan1} found that both the Born-Infeld equation and the linear wave equation admit the same family of explicit self-similar solutions
$$
u_k(t,x)=k\ln(\frac{T-t+x}{T-t-x}),\quad |x|< T-t,\quad t\in[0,T),\quad \forall k\in\RR/\{0\},
$$
where $T$ denotes the maximal existence time.
Moreover, those explicit solutions of the Born-Infeld equation are the timelike singularities.
In two dimension case, Nguyen \& Tian \cite{tian} proved the existence of blowup solution when the string moving in Einstein vaccum spacetime. After that,
Yan \cite{Yan2} considered the the radially symmetric case:
$$
u_{tt}-u_{rr}-\frac{u_r}{r}+u_{tt}u_r^2+u_{rr}u_t^2-2u_tu_ru_{tr}+\frac{1}{r}u_ru_t^2-\frac{1}{r}u_r^3=0,
$$
where $r=|x|$, it admits two stable explicit lightlike self-similar solutions
$$
u_T^{\pm}(t,r)=\pm(T-t)\sqrt{1-(\frac{r}{T-t})^2}, \quad\quad (t,r)\in(0,T)\times[0,T-t],
$$
where the positive constant $T$ denotes the maximal existence time.
In spired by the radially symmetric case, one can check the timlike minimal surface equation (\ref{E1-1}) admits two explicit lightlike blowup solutions:
\bel{Yn01-1}
u_T^{\pm}(t,x)=\pm(T-t)\Big(1-\sum_{i=1}^M{x_i^2\over (T-t)^2}\Big)^{{1\over2}},
\ee
which are two self-similar spheres in geometry. Those two functions exhibit the smooth for all $0<t<T$, but which break down at $t=T$ in the sense that
$$
\partial_{x_1x_1}u_T^{\pm}(t,x)|_{x_1=0}\rightarrow+\infty,~~as~~t\rightarrow T^{-},
$$
and the dynamical behavior of them are as attractors.
At the initial time $t=0$, the form of it is a sphere:
$$
x_1^2+x_2^2+\ldots+x_n^2+(u_T^{\pm}(0,x))^2=T^2,
$$
then as the time $t$ approach the blowup time $T$, it begins to expand until it starts to shrink and eventually collapses to a point at the time $T$.
In $4$-dimensional radial case, Bahouri \& Perelman \& Marachli constructed a blowup solution of
the hyperbolic vanishing mean curvature flow surfaces asymptotic to Simons cone. Recently, Yan \cite{YN} showed that (\ref{ENNN1-1}) admits the stable self-similar shrinkers (the shape of them are spheres) without the radial assumption in higher dimension $M\geq 9$. It is still open question for the dimension $2\leq M\leq 8$.

\subsection{Main result}
In the present paper, we aim to  investigate the dynamical behavior around the stationary solution of equation (\ref{ENNN1-1}) (i.e. answer the problem (i)).
We supplement equation (\ref{ENNN1-1}) with an initial data
\begin{equation}\label{E1-2R1}
u(0,x)=u_0(x),~~u_{t}(0,x)=u_1(x).
\end{equation}
Since the minimal surface equation (\ref{0ENNN1-1}) only admits the hyperplane solution, we can denote it by
\begin{equation}\label{E01-2R1}
u_s(x)=A\cdot x+B,~~\forall x\in\RR^M,~~2\leq M\leq7,
\end{equation}
where $A$ and $B$ denote two constant vectors in $\RR^M$, and we require that $A$ is non-zero vector. According to the classification of solution, (\ref{E01-2R1}) is the spacelike solution of the timelike minimal surface equation (\ref{ENNN1-1}).

We state the main result.
\begin{theorem}
Let the dimension $2\leq M\leq 7$. The hyperplane solution (\ref{E01-2R1}) of
the timelike extremal hypersurfaces equation (\ref{ENNN1-1}) is asymptotic stable in Sobolev space $H^s(\RR^M)$ for any $s\geq1$, that is,
there exist a small positive constant $\eps$, if the initial data (\ref{E1-2R1}) satisfies
$$
u_0(x)=u_s(x)+w_0(x),\quad u_1(x)=w_1(x),
$$
where smooth functions $w_0(x)$ and $w_1(x)$ are supported in $\Big\{x\in\RR^M   \Big ||x|\leq1\Big\}$, and
$$
\|w_0(x)\|_{H^{s+1}(\RR^M)}+\|w_1(x)\|_{H^s(\RR^M)}<\eps,
$$
then the timelike extremal hypersurfaces equation (\ref{ENNN1-1}) admits a unique global solution $u(t,x)\in H^s(\RR^M)$ such that
$$
\lim_{t\rightarrow+\infty}\|u(t,x)-u_s(x)\|_{H^s(\RR^M)}=0.
$$
\end{theorem}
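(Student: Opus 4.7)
The plan is to write $u = u_s + w$ and derive the quasi-linear wave equation satisfied by the perturbation $w$, which takes the divergence form
\begin{equation*}
\partial_t\left(\frac{\partial_t w}{\sqrt{Q}}\right) - {\rm div}\left(\frac{A + \nabla w}{\sqrt{Q}}\right) = 0, \qquad Q = 1 - |\partial_t w|^2 + |A + \nabla w|^2.
\end{equation*}
Linearizing at $w=0$ and using $\partial_t u_s = 0$, $\nabla u_s = A$ yields the constant-coefficient hyperbolic equation
\begin{equation*}
\partial_t^2 w - \Delta w + \frac{1}{1+|A|^2}(A\cdot\nabla)^2 w = 0,
\end{equation*}
which, after a linear spatial change of variables adapted to the hyperplane, reduces to the standard Minkowski wave equation. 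Hence there is no inherent damping in the principal part, and the free-wave decay rate $t^{-(M-1)/2}$ is too weak for small $M$ to close a classical vector-field bootstrap. This confirms the introduction's claim that the standard approach fails and sets the stage for the real work.

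The next and decisive step is to uncover the ``hidden dissipative structure.'' I would pursue it by reformulating the equation through the Lagrangian quantities: the conserved momentum density $\partial_t u/\sqrt{Q}$ and the current $\nabla u/\sqrt{Q}$. Rather than testing against the naive multiplier $\partial_t w$, I would try multipliers weighted by powers of $\sqrt{Q}$, aiming for an identity of the form
\begin{equation*}
\partial_t \Hcal[w](t) + \Dcal[w](t) = \Ncal[w](t),
\end{equation*}
where $\Hcal$ is a coercive energy, $\Dcal$ a nonnegative dissipation forced by the nontriviality of $A\neq0$, and $\Ncal$ a genuinely cubic remainder. Equivalently, one can seek a nonlinear substitution $v=\Phi(w,\nabla w,\partial_t w)$ whose evolution is of damped-wave type, with damping and elliptic coefficients depending on $(t,x)$ through the background $u_s$ and the solution itself. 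The expectation is that the asymmetry introduced by $A$ breaks the exact conservation of the flat background energy in a favorable sign.

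Once such a structure is extracted, the plan is to prove a self-contained global well-posedness and decay theorem for the inhomogeneous linear damped wave equation
\begin{equation*}
\partial_t^2 v + d(t,x)\partial_t v - a^{ij}(t,x)\partial_i\partial_j v - b^i(t,x)\partial_i v = f,
\end{equation*}
with $d$ bounded below by a positive constant on the effective support of $v$ and $a^{ij}$ uniformly elliptic. The argument would use weighted energy identities together with commutation by $\partial^\alpha$ to propagate $H^s$ regularity, and crucially exploit the compact support of $w_0,w_1$ in $\{|x|\le1\}$ combined with finite propagation speed to confine $w(t,\cdot)$ inside a cone of radius $\lesssim t$, preventing any degeneration of the variable coefficients at infinity. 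A Duhamel formula built from this linear flow then controls the inhomogeneous problem with the dissipation providing decay.

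The nonlinear theorem follows by a standard bootstrap on the set
\begin{equation*}
\Jcal = \Big\{ T\geq 0 \ \Big|\ \sup_{0\leq t\leq T}\bigl(\|w(t)\|_{H^{s+1}(\RR^M)}+\|\partial_t w(t)\|_{H^s(\RR^M)}\bigr)\leq 2C\eps \Big\},
\end{equation*}
using the damped linear estimate to absorb $\Ncal[w]$ into $\Dcal[w]$ under the a priori hypothesis and improving the constant from $2C\eps$ to $C\eps$, hence $\Jcal=[0,\infty)$; integrability of $\Dcal$ in time then yields $\|w(t)\|_{H^s(\RR^M)}\to0$. The main obstacle, and the place where I expect the paper's novelty to lie, is the second step: finding the right multiplier or substitution that turns a manifestly energy-conserving hyperbolic equation into an effectively damped one. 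A tightly coupled secondary difficulty is that the damping coefficient depends on $\nabla w$, so the linear theory and the nonlinear bootstrap must be iterated jointly, with the strict timelike bound $1+|\nabla u|^2-(\partial_t u)^2>0$ used both to keep the equation hyperbolic and to keep $\sqrt{Q}$ bounded above and below throughout the continuity argument.
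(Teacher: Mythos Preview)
Your diagnosis of the obstacle is right, but the mechanism you propose for overcoming it is not the one the paper uses, and I do not see how yours would succeed. At $w=0$ the quantity $Q$ collapses to the constant $1+|A|^2$, so multipliers weighted by powers of $\sqrt{Q}$ give nothing beyond the flat energy identity, and you yourself note that after a linear change of variables the linearization is exactly the free wave equation; there is no dissipation hidden in it for any choice of multiplier. The paper does something different: it \emph{does not} linearize at $w=0$. Instead it fixes an explicit nonzero profile $w^{(0)}(t,x)=t^2(t^4+1)^{-p}w^{(0)}(x)$ with $A\cdot\nabla w^{(0)}(x)>0$, and linearizes the quasilinear operator around $w^{(0)}$. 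The cross terms between $A$ and $\nabla w^{(0)}_t$ then produce a genuine first-order damping coefficient
\[
A_4(t,x)\sim 4t(t^4+1)^{-p-1}\bigl((2p-1)t^4-1\bigr)\,A\cdot\nabla w^{(0)}(x)>0\quad\text{for }t>(2p-1)^{-1/4}.
\]
Thus the ``hidden dissipative structure'' is not an identity waiting to be discovered at the trivial background; it is manufactured by deliberately expanding around a nontrivial, time-decaying approximate solution which the iteration subsequently corrects. Your search for a substitution $v=\Phi(w,\nabla w,\partial_t w)$ at $w=0$ would not locate it.

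There is a second gap in the closing argument. You plan a standard bootstrap on a fixed $H^{s+1}\times H^s$ ball, but the linearized operator $\Lcal^{(m)}_{w^{(m-1)}}$ has coefficients built from $\nabla w^{(m-1)},\,w^{(m-1)}_t,\,\nabla^2 w^{(m-1)}$, so each inversion loses derivatives and a contraction in a single Sobolev scale does not close. The paper therefore runs a Nash--Moser scheme: at step $m$ it applies smoothing projectors $\Pi_{N_m}$, solves $\Pi_{N_m}\Lcal^{(m)}_{w^{(m-1)}}h^{(m)}=\Pi_{N_m}E^{(m-1)}$, proves the tame bound $\|\Pi_{N_m}E^{(m)}\|_{H^s}\lesssim N_m^{4s}\|h^{(m)}\|_{H^s}^2$, and obtains the doubly exponential decay $\|E^{(m)}\|\lesssim\eps^{2^m}$. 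The damped linear theory you anticipate is indeed the content of Section~2, but it feeds Nash--Moser, not a bootstrap.
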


\begin{remark}
We remark that our result also build up a global well-posedness result for the the timelike extremal hypersurfaces equation (\ref{ENNN1-1}). 
The uniqueness global solution takes the form
$$
u(t,x)=u_s(x)+w(t,x),
$$
with 
$$
\sup_{t\in(0,\infty)}\|w(t,x)\|_{H^s(\RR^M)}\lesssim\eps.
$$ 
Meanwhile, $w(t,x)$ decays in time with \textbf{polynomial} form.

Furthermore, it holds
\begin{eqnarray*}
1+|\nabla u|^2-u_t^2&=&1+|\nabla u_s(x)+\nabla w|^2-|w_t|^2\\
&\sim&1+|\nabla u_s(x)|^2+O(\eps)>0.
\end{eqnarray*}
Therefore, we construct a unique global timelike non-small solution near the hyperplane.

\end{remark}

\begin{remark}
For the dimension $M\geq8$, theorem 1.1 also holds. But we are more interested in the stability of non-hyperplane solution of minimal surface equation for the dimension $M\geq8$,
for example, the stability of Simons cone.
\end{remark}

\subsection{Sketch of the proof}

Equation (\ref{ENNN1-1}) can be rewritten as
$$
\Big(1+|\nabla_x u|^2-u_t^2\Big)^{-{3\over2}}\Fcal(u)=0,\quad \forall (t,x)\in\RR^+\times\RR^M,
$$
where
\begin{eqnarray}\label{n1-0}
\Fcal(u)&:=&
u_{tt}-\Delta u+u_{tt}|\nabla u|^2-\frac{1}{2}u_t\partial_t|\nabla u|^2\nonumber\\
&&-\sum_{k=1}^M\left(\partial^2_{x_k}u(|\nabla u|^2-u_t^2)-\frac{1}{2}\partial_{x_k}u(\partial_{x_k}|\nabla u|^2-\partial_{x_k}u_t^2)\right).
\end{eqnarray}
Thus finding the solution of (\ref{ENNN1-1}) is equivalent to solve the equation
\begin{equation}\label{E1-1}
(1+|\nabla u|^2)u_{tt}-(1+|\nabla u|^2-u_t^2)\Delta u-\frac{1}{2}u_t\partial_t|\nabla u|^2+\frac{1}{2}\sum_{k=1}^M\partial_{x_k}u\partial_{x_k}(|\nabla u|^2-u_t^2)=0,~~
\end{equation}
where $ \forall (t,x)\in\RR^+\times\RR^M$, and the operator $\Delta=\sum_{k=1}^M\del^2_{x_k}$ is the Laplace-Beltrami operator.

From (\ref{E1-1}), the stationary equation of it is
\bel{y1-0}
-(1+|\nabla u|^2)\Delta u+\frac{1}{2}\sum_{k=1}^M\partial_{x_k}u\partial_{x_k}|\nabla u|^2=0,
\ee
which is equivalent to the minimal surface equation (\ref{0ENNN1-1}). Thus to prove theorem 1.1, we only need to consider the perturbation equation from (\ref{E1-1})-(\ref{y1-0}).
More precisely, we set the solution of (\ref{E1-1}) having the form
$$
u(t,x)=u_s(t,x)+w(t,x), \quad \forall x\in\RR^M,~t>0,
$$
where $u_s(t,x)$ given in (\ref{E01-2R1}) is the solution of (\ref{y1-0}), then we substitute it into (\ref{E1-1}) to get the perturbation equation:
\bel{E0001-1}
\aligned
\Lcal w:=&\Big(1+|A+\nabla w|^2\Big)w_{tt}-\Big(1+|A+\nabla w|^2-w_t^2\Big)\Delta w-\frac{1}{2}w_t\partial_t\Big((\nabla w+2A)\cdot\nabla w\Big)\\
&+\frac{1}{2}\sum_{k=1}^M\Big(\partial_{x_k}w+A_k\Big)\partial_{x_k}\Big((\nabla w+2A)\cdot\nabla w-w_t^2\Big)=0,
\endaligned
\ee
which is a quasilinear wave equation, and the linear equation of it is
\bel{yy0-1}
w_{tt}-\Delta w+(1+|A|^2)^{-1}\sum_{k=1}^M\sum_{k'=1}^MA_kA_{k'}\del_{x_k}\del_{x_{k'}}w=0.
\ee
We supplement it with an initial data
$$
w(t,x)|_{t=0}=w_0(x),\quad w_t(t,x)|_{t=0}=w_1(x),
$$
then utilizing the energy inequality given in Proposition 6.3.2 of the book of H\"{o}rmander \cite{H1}, it holds
$$
\|w_t\|_{L^2(\RR^M)}+\sum_{k=1}^M\|\del_{x_k}w\|_{L^2(\RR^M)}\leq 2\Big(\|w_1(x)\|_{L^2(\RR^M)}+\sum_{k=1}^M\|\del_{x_k}w(t,x)|_{t=0}\|_{L^2(\RR^M)}\Big),
$$
from which, the bootstrap argument (based on the energy estimate) loses efficacy due to
the absence of time-decay of solution for the linear wave equation (\ref{yy0-1}). But if we choose a function $w^{(0)}(t,x)$ to be satisfies
$$
-(A+\nabla w^{(0)})\cdot\nabla w^{(0)}_t+w^{(0)}_t\Delta w^{(0)}>0,\quad \forall t>T^*>0,
$$
then
we linearize nonlinear equation (\ref{E0001-1}) at it to get the following linear damped wave equation
$$
\aligned
\Lcal^{(0)}_{w^{(0)}}h:=&\Big(1+|A+\nabla w^{(0)}|^2\Big)h_{tt}-\Big(1+|A+\nabla w^{(0)}|^2-(w^{(0)}_t)^2\Big)\Delta h\\
&+\sum_{k=1}^M\Big(\del_{x_k}w^{(0)}+A_k\Big)\Big(\nabla w^{(0)}+A\Big)\cdot\nabla\del_{x_k}h+2\Big(-(A+\nabla w^{(0)})\cdot\nabla w^{(0)}_t+w^{(0)}_t\Delta w^{(0)}\Big)h_t\\\
&+\Big[{1\over2}\nabla\Big(|\nabla w^{(0)}|^2-2(w^{(0)}_t)^2+2A\cdot\nabla w^{(0)}\Big)+\sum_{k=1}^M\Big(\del_{x_k}w^{(0)}+A_k\Big)\nabla\del_{x_k}w^{(0)}\\
&+2\Big(\nabla w^{(0)}+A\Big)\Big(w^{(0)}_{tt}-\Delta w^{(0)}\Big)\Big]\cdot\nabla h-2w^{(0)}_t\Big(\nabla w^{(0)}+A\Big)\cdot\nabla h_t=0,\quad t>T^*>0,
\endaligned
$$
where the positive constant $T^*$ is the local existence time of solution for above linear wave equation. It is not a damped linear wave equation for $t\in[0,T^*]$. 
The proof of local existence of it can be followed from the book of Sogge \cite{Sog}.

It gives a possible way to get a time-decay solution of linear wave equation. But the function $w^{(0)}(t,x)$ is not a solution of equation (\ref{E0001-1}), there must be an error term denoted by
$$
E^{(0)}:=\Lcal(w^{(0)}),
$$
meanwhile, the function $w^{(0)}(t,x)$ should be chosen to make the error term small, i.e.
$$
E^{(0)}=\Lcal(w^{(0)})\sim\eps,~~in~some~function~space,
$$
where $E^{(0)}$ is called as the initial error term. In order to construct the solution of nonlinear equation (\ref{E0001-1}), we should approximate it step by step. So the first approximation solution has the form
$w^{(1)}(t,x):=w^{(0)}(t,x)+h^{(1)}(t,x)$, where $h^{(1)}(t,x)$ is the solution of linear damped wave equation
$$
\Lcal^{(0)}_{w^{(0)}}h^{(1)}=E^{(0)}.
$$
Forward this idea, we get the $m$th approximation step $h^{(m)}(t,x)$ by solving the linear equation
$$
\Lcal^{(0)}_{w^{(m-1)}}h^{(m)}=E^{(m-1)},\quad \forall m\in\NN,
$$
where the error term $E^{(m-1)}:=\Lcal(w^{(m-1)})$. Then the $m$th approximation solution is obtained as the form
$$
w^{(m)}(t,x)=w^{(0)}(t,x)+\sum_{i=1}^mh^{(i)}(t,x).
$$
At last, the most important thing is to prove
$$
\lim_{m\rightarrow+\infty}w^{(m)}(t,x)=w^{(\infty)}(t,x)<+\infty,
$$
and the error term
$$
\lim_{m\rightarrow+\infty}E^{(m)}(t,x)=\lim_{m\rightarrow+\infty}\Lcal(w^{(\infty)})=0.
$$
We mention that the paper of Yang \cite{Yang} proved the global well-posedness for a class of nonlinear wave equation with variable coefficients when the nonlinear term satisfies the null condition.
Here we should notice that there is loss of derivatives in each iteration step due to the quasilinear terms in (\ref{E0001-1}), so
we have to use the smooth operator (see \cite{AP} for more details on this operator) to smooth the linearized equation at each iteration step.
Therefore, we construct the solution $w^{(\infty)}(t,x)$ of nonlinear equation (\ref{E0001-1}).
Above method is called as Nash-Moser iteration scheme.
It has been used in \cite{Yan,Yan1,Yan2,YZ}.
We refer the readers to \cite{H,H1,Moser,Nash,R} for more details of this method.

\textbf{Notation.} Thoughout this paper,
we denote
$\mathbb{N}$ by the natural numbers $\{1,2,3,\ldots\}$. $\textbf{0}$ is the vector of zero.
The symbol $a\lesssim b$ means that there exists a positive constant $C$ such that $a\leq Cb$.
$\CC^{\infty}_0(\RR^+\times\RR^M)$ is the space of $u:\RR^+\times\RR^M\rightarrow\RR$, and $u$ is infinitely differentiable with compact support.
Furthermore, we denote the usual norm of Sobolev space $H^l(\RR^M)$ by $\|\cdot\|_{H^l}$ for convenience.
The space $\LL^2((0,\infty);H^l(\RR^M)$ is equipped with the norm
$$
\|v\|^2_{\LL^2((0,\infty);H^l)}:=\int_0^{\infty}\|v(t,\cdot)\|^2_{H^l}dt.
$$

The organization of this paper is as follows. In Section 2, we give a general global existence result of a class of linear damped wave equation with variable coefficients.
 In Section 3, the well-posedness of linearized problem is shown by finding the time decay estimate of first approximation step.
After that, we show the existence of general approximation step for nonlinear perturbation problem. In the last section, the convergence of approximation scheme is given.



\section{The linear damped wave equation with variable coefficients }\setcounter{equation}{0}

In this section, we give the general existence of result  for a class of linear damped wave equation with \textbf{smooth} variable coefficients.
We consider the following initial value problem:
\bel{NYN0-1}
\aligned
A(t,x)h_{tt}&-B(t,x)\Delta h+C(t,x)h_t+\sum_{k=1}^MD_k(t,x)\partial_{x_k}h+\sum_{k=1}^ME_k(t,x)\partial_{x_k}h_t\\
&+\sum_{k=1}^M\sum_{i=1}^MH_{ki}(t,x)\partial_{x_k}\partial_{x_i}h=f(t,x),\quad \forall (t,x)\in\RR^+\times\RR^M,
\endaligned
\ee
with an initial data
$$
h(0,x)=h_0(x),\quad h_t(0,x)=h_1(x),\quad \forall x\in\RR^M.
$$
We assume that coefficients of (\ref{NYN0-1}) satisfy the following condition:
$$
A(t,x), B(t,x), C(t,x), D_k(t,x), E_k(t,x), H_{ki}(t,x)\in\CC^{\infty}(\RR^M),\quad \forall k,i=1,\ldots,M,
$$
and
\bel{NYN0-2}
\sigma_0>A(t,x)>\sigma>1,\quad B(t,x)>\sigma>1,\quad C(t,x)>0, \quad\forall ( t,x)\in\RR^+\times\RR^M,
\ee
and $H_{ki}=H_{ik}$,
\bel{NYN0-3}
|H|:=\sum_{k,i=1}^M|H_{ki}|\leq\sigma\quad and \quad H_{ki}>0,
\ee
and
\bel{NYN0-4}
|D|:=\sum_{k=1}^M|D_{k}|\sim\eps,\quad |E|:=\sum_{k=1}^M|E_{k}|\sim\eps,
\ee
and
\bel{NYN0-04}
\aligned
&\|\del^sA\|_{\LL^{\infty}}\sim\eps,\quad \|\del^sB\|_{\LL^{\infty}}\sim\eps,\quad \|\del^sC\|_{\LL^{\infty}}\sim\eps,\quad \|\del^sD_k\|_{\LL^{\infty}}\sim\eps,\\
&\|\del^sE_k\|_{\LL^{\infty}}\sim\eps,\quad \|\del^sH_{ik}\|_{\LL^{\infty}}\sim\eps,\quad\forall s\in\{1,2,3,\ldots\},
\endaligned
\ee
for a positive small constant $\eps$. Here we use $\del$ to denote the derivative of time or spacial variable.

We choose two weighted positive smooth functions $\varphi(t,x)$ and $\overline{\varphi}(t,x)$ in $\CC^{\infty}_0(\RR^+\times\RR^M)$, and satisfying
\begin{eqnarray}
\label{NYN0-5}
&&\varphi_t+c\varphi\leq0,\quad \partial^s\varphi\sim\eps,
\\
\label{NYN0-7}
&&\overline{\varphi}_t+c\overline{\varphi}\leq0,\quad \overline{\varphi}_{tt}\geq c^2\overline{\varphi},\quad \partial^s\overline{\varphi}\sim\eps\\
\label{NYN0-6}
&&c\overline{\varphi}\leq \varphi,
\end{eqnarray}
with positive constant $c$. Moreover, there exists a positive constant $C_{c,\sigma,\eps}$ depending on
parameters $\sigma$ and $c$ such that
\bel{NYN0-06}
\overline{\varphi}^{-1}e^{-C_{c,\sigma,\eps}t}\leq e^{-\eps t},\quad \forall t>0.
\ee
Here the value of $\sigma$ is crucial for above assumption.

We now derive a weigthed $\LL^2$-estimate of solution for the linear equation (\ref{NYN0-1}).

\begin{lemma}
Let $f\in \CC((0,\infty); \LL^2(\RR^M))$. Assume that (\ref{NYN0-2})-(\ref{NYN0-4}) hold.
Then the solution of linear wave equation (\ref{NYN0-1}) satisfies
$$
\int_{\RR^M}\overline{\varphi}\Big((h_t)^2
+|\nabla h|^2+ h^2\Big)dx\lesssim e^{-C_{c,\sigma,\eps}t}\Big[\int_{\RR^M}\overline{\varphi}(0,x)\Big(h_1^2
+|\nabla h_0|^2+ h_0^2\Big)dx+\int_0^{\infty}\int_{\RR^M}\overline{\varphi}f^2dxdt\Big].
$$
\end{lemma}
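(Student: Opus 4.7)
The plan is to run a weighted energy estimate with multiplier $2\overline{\varphi}h_t$. The time derivative of the weight supplies exponential damping via $\overline{\varphi}_t+c\overline{\varphi}\leq 0$, while the smallness of $\partial^s\overline{\varphi}$, $D_k$, $E_k$ and $\partial^s A,\partial^s B,\partial^s H_{ki}$ lets every residual cross term be absorbed. First I would introduce the weighted energy
\[
E_1(t):=\int_{\RR^M}\overline{\varphi}\Big[A h_t^2+B|\nabla h|^2-H_{ki}\partial_{x_k}h\partial_{x_i}h\Big]dx,
\]
whose spatial part is positive definite because $|H|\leq\sigma<B$; hence $E_1(t)\gtrsim \int\overline{\varphi}(h_t^2+|\nabla h|^2)\,dx$.

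Multiplying (\ref{NYN0-1}) by $2\overline{\varphi}h_t$, integrating over $\RR^M$, and integrating by parts (using $H_{ki}=H_{ik}$ to symmetrize the mixed second-order term) produces the identity
\[
\tfrac{d}{dt}E_1(t)+2\!\int\!\overline{\varphi}C h_t^2\,dx=\int(\overline{\varphi}A)_t h_t^2\,dx+\int(\overline{\varphi}B)_t|\nabla h|^2\,dx-\int(\overline{\varphi}H_{ki})_t\partial_{x_k}h\partial_{x_i}h\,dx+\Rcal(t)+2\!\int\!\overline{\varphi}h_t f\,dx,
\]
where $\Rcal(t)$ collects the spatial commutator contributions from $D_k$, $E_k$, and from $\partial_{x_k}(\overline{\varphi}B)$ and $\partial_{x_k}(\overline{\varphi}H_{ki})$. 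Writing $(\overline{\varphi}A)_t=\overline{\varphi}_t A+\overline{\varphi}A_t\leq(-cA+\eps)\overline{\varphi}$ and analogously for $B$ and $H_{ki}$, the first three terms contribute $\leq-\kappa E_1(t)$ for some $\kappa=\kappa(c,\sigma,\eps)>0$ once $\eps$ is small. The residual $\Rcal(t)$ and the source term split by Cauchy--Schwarz into $\leq\eta E_1(t)+C\eta^{-1}\int\overline{\varphi}f^2\,dx$ with $\eta$ arbitrarily small, which closes because every spatial derivative of the weight or of a coefficient is $O(\eps)$.

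Since (\ref{NYN0-1}) has no zeroth-order term in $h$, the weighted $L^2$-norm $\int\overline{\varphi}h^2\,dx$ is not produced by the $h_t$-multiplier and must be recovered separately via the auxiliary identity
\[
\tfrac{d}{dt}\!\int\overline{\varphi}h^2\,dx=\int\overline{\varphi}_t h^2\,dx+2\!\int\overline{\varphi}h h_t\,dx\leq-\tfrac{c}{2}\!\int\overline{\varphi}h^2\,dx+\tfrac{2}{c}\!\int\overline{\varphi}h_t^2\,dx.
\]
Forming the total energy $\tilde E(t):=E_1(t)+\lambda\!\int\!\overline{\varphi}h^2\,dx$ with $\lambda$ small enough that the $\int\overline{\varphi}h_t^2$ feedback is dominated by the $-\kappa E_1(t)$ damping, one obtains the differential inequality $\tfrac{d}{dt}\tilde E(t)+\gamma\tilde E(t)\leq C\!\int\!\overline{\varphi}f^2\,dx$. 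Gr\"onwall then yields $\tilde E(t)\leq e^{-\gamma t}\tilde E(0)+\int_0^t e^{-\gamma(t-s)}\!\int\overline{\varphi}f^2\,dx\,ds$, and combining the exponential decay of $\overline{\varphi}$ itself with the comparison (\ref{NYN0-06}) majorizes the convolution by $e^{-C_{c,\sigma,\eps}t}\!\int_0^\infty\!\int\overline{\varphi}f^2\,dx\,dt$, giving the stated bound. The main obstacle I expect is controlling $\Rcal(t)$: the commutators $\nabla(\overline{\varphi}B)=B\nabla\overline{\varphi}+\overline{\varphi}\nabla B$ are only $O(\eps)$ in absolute size, and converting each piece into a bound carrying a $\overline{\varphi}$ weight suitable for absorption into $\eta E_1(t)$ requires using the engineered smallness (\ref{NYN0-5})--(\ref{NYN0-7}) to relate $|\nabla\overline{\varphi}|$ to $\eps\overline{\varphi}$ on the (compact) support where these derivatives live.
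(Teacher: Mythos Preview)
Your argument is correct but takes a different route from the paper's. The paper uses \emph{two} multipliers and \emph{two} weights: it tests the equation against $\varphi h_t$ (yielding (\ref{T1})) and separately against $\overline{\varphi}h$ (yielding (\ref{T2})), then adds. The second multiplier, applied to the PDE itself, is what produces the $h^2$ control there, but integrating $A\overline{\varphi}h_{tt}\,h$ by parts creates a bad term $-\int A\overline{\varphi}(h_t)^2\,dx$; this is why the larger weight $\varphi\geq c\overline{\varphi}$ is needed on the $h_t$ multiplier, so that the combination $A(\varphi-\overline{\varphi})(h_t)^2$ in (\ref{TT1}) stays positive. The whole two-weight apparatus (\ref{NYN0-5})--(\ref{NYN0-6}) is built around compensating that negative contribution.

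You bypass this entirely: you test only against $\overline{\varphi}h_t$, and recover the $h^2$ piece from the elementary identity $\tfrac{d}{dt}\int\overline{\varphi}h^2=\int\overline{\varphi}_t h^2+2\int\overline{\varphi}hh_t$, which uses nothing about the PDE. This works because the weight decay $\overline{\varphi}_t+c\overline{\varphi}\leq 0$ already supplies the damping on $\int\overline{\varphi}h^2$ once $\int\overline{\varphi}h_t^2$ is controlled by $E_1$. Your route is more elementary and avoids the second weight $\varphi$ altogether; the paper's route extracts more structure from the equation (e.g.\ the positive $\int B\overline{\varphi}|\nabla h|^2$ term from the $\overline{\varphi}h$ multiplier), which would matter in settings where the weight alone does not decay fast enough to damp the $L^2$ piece. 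Your caveat about bounding $|\nabla\overline{\varphi}|$ by something comparable to $\eps\overline{\varphi}$ is the right concern; the paper handles the analogous commutators in the same heuristic way, and in the concrete application (Lemma~2.2) the weights are taken independent of $x$, so the issue disappears.
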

\begin{proof}
On one hand, we multiply equation \eqref{NYN0-1} with $\varphi(t,x)h_t$, then integrating it over $\RR^M$ on $x$, it holds
\begin{eqnarray}\label{2.1}
&&\int_{\RR^M}A\varphi h_{tt}h_tdx-\int_{\RR^M}B\varphi\Delta hh_tdx
+\int_{\RR^M}C\varphi(h_{t})^2dx
+\sum_{k=1}^M\int_{\RR^M}\varphi D_k\partial_{x_k}hh_tdx\nonumber\\
&&+\sum_{k=1}^M \int_{\RR^M}\varphi E_k\partial_{x_k}h_th_tdx
+\sum_{k,i=1}^M\int_{\RR^M} \varphi H_{ki}\partial_{x_k}\partial_{x_i}hh_tdx=\int_{\RR^M}f\varphi h_tdx.
\end{eqnarray}
Direct computation gives that
\begin{equation}\label{A1}
\int_{\RR^M}A\varphi h_{tt}h_tdx=\frac{1}{2}\frac{d}{dt}\int_{\RR^M}A\varphi(h_t)^2dx-\frac{1}{2}\int_{\RR^M}\partial_t(A\varphi)(h_t)^2dx,
\end{equation}
and
\begin{eqnarray}\label{B1}
-\int_{\RR^M}B\varphi\Delta hh_tdx
&=&\sum_{k=1}^M\int_{\RR^M}\partial_{x_k}(B\varphi)\partial_{x_k}hh_tdx
+\sum_{k=1}^M\int_{\RR^M}B\varphi\partial_{x_k}h\partial_{x_k}h_tdx\nonumber\\
&=&\sum_{k=1}^M\int_{\RR^M}\partial_{x_k}(B\varphi)\partial_{x_k}h  h_tdx
+\frac{1}{2}\frac{d}{dt}\sum_{k=1}^M\int_{\RR^M}B\varphi(\partial_{x_k}h)^2dx\nonumber\\
&&-\frac{1}{2}\sum_{k=1}^M\int_{\RR^M}\del_t(B\varphi)(\partial_{x_k}h)^2,
\end{eqnarray}
and
\begin{equation}\label{E1}
\sum_{k=1}^M\int_{\RR^M}\varphi E_k\partial_{x_k}h_th_tdx
=-\frac{1}{2}\sum_{k=1}^M\int_{\RR^M}\partial_{x_k}(E_k\varphi)(h_t)^2dx,
\end{equation}
and
\begin{eqnarray}\label{H1}
\sum_{k,i=1}^M\int_{\RR^M}\varphi H_{ki}\partial_{x_k}\partial_{x_i}hh_tdx
=-\frac{1}{2}\frac{d}{dt}\sum_{i,l=1}^M\int_{\RR^M}\varphi H_{il}\partial_{x_i}h\partial_{x_l}hdx,
\end{eqnarray}
where equality \eqref{H1} is derived by utilizing the following formula
$$
2\sum_{i=0}^nK^i\partial_iu\sum_{j,k=0}^ng^{jk}\partial_{j}\partial_{k}u
=\sum_{i,j=0}^n\partial_{j}(T_i^j(u)K^i),
$$
with
$$
 T_i^j(u)=2\sum_{k=0}^ng^{jk}\partial_{k}u\partial_{i}u-\delta_i^j\sum_{k,l=0}^ng^{kl}\partial_{k}u\partial_{l}u,
$$
by setting $H_{00}=H_{k0}=H_{0i}=0$. $\delta_i^j=1$ for $i=j$, otherwise, it is zero. One can see page 97 in the book of H\"{o}rmander \cite{H1} for more details of above formula.

 Thus,  by \eqref{A1}-\eqref{H1}, we reduce \eqref{2.1} into
\begin{eqnarray}\label{T1}
&&\frac{1}{2}\frac{d}{dt}\int_{\RR^M}\Big[A\varphi(h_t)^2
+B\varphi\sum_{k=1}^M(\partial_{x_k}h)^2
-\varphi\sum_{i,l=1}^MH_{il}\partial_{x_i}h\partial_{x_l}h\Big]dx\nonumber\\
&&\quad+\frac{1}{2}\int_{\RR^M}\Big[-\del_t(A\varphi)
+2C\varphi-\sum_{k=1}^M\partial_{x_k}(E_k\varphi)\Big](h_t)^2dx\nonumber\\
&&\quad-\frac{1}{2}\sum_{k=1}^M\int_{\RR^M}\del_t(B\varphi)(\partial_{x_k}h)^2dx
+\sum_{k=1}^M\int_{\RR^M}\varphi D_k\partial_{x_k}hh_tdx+\sum_{k=1}^M\int_{\RR^M}\partial_{x_k}(B\varphi)\partial_{x_k}h  h_tdx\nonumber\\
&&=\int_{\RR^M}f\varphi h_tdx.
\end{eqnarray}

On the other hand, we multiply equation \eqref{NYN0-1} with $\overline{\varphi}(t,x)h$,  then integrating it over $\RR^M$ on $x$, it holds
\begin{eqnarray}\label{2.2}
&&\int_{\RR^M}A\overline{\varphi}h_{tt}hdx
-\int_{\RR^M}B\overline{\varphi}\Delta hhdx
+\int_{\RR^M}C\overline{\varphi}h_{t}hdx+\sum_{k=1}^M\int_{\RR^M}\overline{\varphi}D_k\partial_{x_k}hhdx\nonumber\\
&&\quad+\sum_{k=1}^M\int_{\RR^M}\overline{\varphi}E_k\partial_{x_k}h_thdx
+\sum_{k,i=1}^M\int_{\RR^M}\overline{\varphi}H_{ki}\partial_{x_k}\partial_{x_i}hhdx\nonumber\\
&&=\int_{\RR^M}f\overline{\varphi}hdx.
\end{eqnarray}
We notice that
\begin{eqnarray}\label{A2}
\int_{\RR^M}A\overline{\varphi}h_{tt}hdx
&=&\frac{d}{dt}\int_{\RR^M}A\overline{\varphi}h_t hdx-\int_{\RR^M}\pdt(A\overline{\varphi})h_t hdx-\int_{\RR^M}A\overline{\varphi}(h_t)^2dx\nonumber\\
&=&\frac{d}{dt}\int_{\RR^M}\Big(A\overline{\varphi}h_t h
-\frac{1}{2}\pdt(A\overline{\varphi})h^2\Big)dx
+\frac{1}{2}\int_{\RR^M}\pdt^2(A\overline{\varphi})h^2dx\nonumber\\
&&\quad-\int_{\RR^M}A\overline{\varphi}(h_t)^2dx,~~~~~~~~~
\end{eqnarray}
and
\begin{eqnarray}\label{B2}
-\int_{\RR^M}B\overline{\varphi}\Delta hhdx
&=&\sum_{k=1}^M\int_{\RR^M}\partial_{x_k}(B\overline{\varphi})\partial_{x_k}h hdx
+\sum_{k=1}^M\int_{\RR^M}B\overline{\varphi}(\partial_{x_k}h)^2dx\nonumber\\
&=&-\frac{1}{2}\sum_{k=1}^M\int_{\RR^M}\partial_{x_k}^2(B\overline{\varphi})h^2dx
+\sum_{k=1}^M\int_{\RR^M}B\overline{\varphi}(\partial_{x_k}h)^2dx,~~~~~~~~
\end{eqnarray}
and
\begin{eqnarray}\label{C2}
\int_{\RR^M}C\overline{\varphi}h_thdx
&=&\frac{1}{2}\frac{d}{dt}\int_{\RR^M}C\overline{\varphi}h^2dx-\frac{1}{2}\int_{\RR^M}\pdt(C\overline{\varphi})h^2dx,\\
\sum_{k=1}^M\int_{\RR^M}D_k\overline{\varphi}\partial_{x_k}hhdx
&=&-\frac{1}{2}\sum_{k=1}^M\int_{\RR^M}\partial_{x_k}(D_k\overline{\varphi})h^2dx,
\end{eqnarray}
and
\begin{eqnarray}\label{E2}
\sum_{k=1}^M\int_{\RR^M}E_k\overline{\varphi}\partial_{x_k}h_thdx
&=&-\sum_{k=1}^M\int_{\RR^M}\partial_{x_k}(E_k\overline{\varphi})h_t  hdx
-\sum_{k=1}^M\int_{\RR^M}E_k\overline{\varphi}h_t\partial_{x_k} hdx\nonumber\\
&=&-\frac{1}{2}\frac{d}{dt}\sum_{k=1}^M\int_{\RR^M}\partial_{x_k}(E_k\overline{\varphi})h^2dx
+\frac{1}{2}\sum_{k=1}^M\int_{\RR^M}\pdt(\partial_{x_k}(E_k\overline{\varphi}))h^2dx\nonumber\\
&&-\sum_{k=1}^M\int_{\RR^M}E_k\overline{\varphi}h_t\partial_{x_k} hdx,
\end{eqnarray}
and
\begin{eqnarray}\label{H2}
\sum_{k,i=1}^M\int_{\RR^M}H_{ki}\overline{\varphi}\partial_{x_k}\partial_{x_i}h h dx
&=&-\sum_{k,i=1}^M\int_{\RR^M}\partial_{x_k}(H_{ki}\overline{\varphi})\partial_{x_i}h hdx
-\sum_{k,i=1}^M\int_{\RR^M}H_{ki}\overline{\varphi}\partial_{x_i}h \partial_{x_k}hdx\nonumber\\
&=&\frac{1}{2}\sum_{k,i=1}^M\int_{\RR^M}\partial_{x_i}\partial_{x_k}(H_{ki}\overline{\varphi})h^2dx
-\sum_{k,i=1}^M\int_{\RR^M}H_{ki}\overline{\varphi}\partial_{x_i}h \partial_{x_k}hdx.~~~~~~~~
\end{eqnarray}

So we combine \eqref{A2}-\eqref{H2} with \eqref{2.2} to get
\begin{eqnarray}\label{T2}
&&\frac{1}{2}\frac{d}{dt}\int_{\RR^M}\Big[\Big(-\pdt(A\overline{\varphi})+C\overline{\varphi}-\sum_{k=1}^M\partial_{x_k}(E_k\overline{\varphi})\Big)h^2
+2A\overline{\varphi}h_th\Big]dx\nonumber\\
&&\quad+\frac{1}{2}\int_{\RR^M}\Big[\pdt^2(A\overline{\varphi})-\sum_{k=1}^M\partial_{x_k}^2(B\overline{\varphi})-\pdt(C\overline{\varphi})
+\sum_{k,i=1}^M\partial_{x_i}\partial_{x_k}(H_{ki}\overline{\varphi})\nonumber\\
&&\quad+\sum_{k=1}^M\partial_{x_k}\del_t(E_k\overline{\varphi})-\sum_{k=1}^M\partial_{x_k}(D_k\overline{\varphi})\Big]h^2dx\nonumber\\
&&\quad+\sum_{k=1}^M\int_{\RR^M}B\overline{\varphi}(\partial_{x_k}h)^2dx-\int_{\RR^M}A\overline{\varphi}(h_t)^2dx
-\sum_{k,i=1}^M\int_{\RR^M}H_{ki}\overline{\varphi}\partial_{x_i}h\partial_{x_k}hdx\nonumber\\
&&\quad-\sum_{k=1}^M\int_{\RR^M}E_k\overline{\varphi}h_t\partial_{x_k} hdx\nonumber\\
&&=\int_{\RR^M}f\overline{\varphi}hdx.
\end{eqnarray}

Furthermore, we use Cauchy inequality to derive
\begin{eqnarray}
\label{w1}
\sum_{i,l=1}^M\int_{\RR^M}\varphi H_{il}\partial_{x_i}h\partial_{x_l}hdx&\leq&C_M\sum_{k=1}^M\int_{\RR^M}\varphi|H|(\partial_{x_k}h)^2dx,\\
\label{w2}
\sum_{k=1}^M\int_{\RR^M}\varphi D_k\partial_{x_k}hh_tdx&\leq&{1\over2}\int_{\RR^M}\varphi|D|\Big(\sum_{k=1}^M(\partial_{x_k}h)^2+(h_t)^2\Big)dx,\\
\label{w003}
\sum_{k=1}^M\int_{\RR^M}\partial_{x_k}(B\varphi)\partial_{x_k}h  h_tdx&\leq&{1\over2}\sum_{k=1}^M\int_{\RR^M}|\partial_{x_k}(B\varphi)|\Big((\partial_{x_k}h)^2+(h_t)^2\Big)dx,\\
\label{w3}
\int_{\RR^M}f\varphi h_tdx&\leq&{1\over2}\int_{\RR^M}\varphi f^2dx+{1\over2}\int_{\RR^M}\varphi (h_t)^2dx,
\end{eqnarray}
and
\begin{eqnarray}\label{w4}
2\int_{\RR^M}A\overline{\varphi}h_th dx&\leq&\int_{\RR^M}A\overline{\varphi}((h_t)^2+h^2)dx,\quad for~A>0,\\
\label{w5}
-\sum_{k,i=1}^M\int_{\RR^M}H_{ki}\overline{\varphi}\partial_{x_i}h\partial_{x_k}hdx&\leq&C_M\sum_{k=1}^M\int_{\RR^M}|H|\overline{\varphi}(\partial_{x_k}h)^2dx,\\
\label{w6}
-\sum_{k=1}^M\int_{\RR^M}E_k\overline{\varphi}h_t\partial_{x_k} hdx&\leq&{1\over2} \int_{\RR^M}|E|\overline{\varphi}(h_t)^2dx+{1\over2}\sum_{k=1}^M\int_{\RR^M}|E|\overline{\varphi}(\partial_{x_k} h)^2dx,\\
\label{w7}
\int_{\RR^M}f\overline{\varphi}hdx&\leq&{1\over2}\int_{\RR^M}\overline{\varphi} f^2dx+{1\over2}\int_{\RR^M}\overline{\varphi}h^2dx.
\end{eqnarray}

Hence, by noticing (\ref{w1})-(\ref{w3}) and (\ref{w4})-(\ref{w7}), it follows from \eqref{T1} and \eqref{T2} that
\begin{eqnarray}\label{TT1}
&&\frac{1}{2}\frac{d}{dt}\int_{\RR^M}\Big[A(\varphi-\overline{\varphi})(h_t)^2
+\varphi(B-|H|)|\nabla h|^2\nonumber\\
&&\quad+\Big(-\pdt(A\overline{\varphi})+(C-A)\overline{\varphi}-\sum_{k=1}^M\partial_{x_k}(E_k\overline{\varphi})\Big)h^2
\Big]dx\nonumber\\
&&\quad+\frac{1}{2}\int_{\RR^M}\Big[\pdt^2(A\overline{\varphi})-\Delta(B\overline{\varphi})-\pdt(C\overline{\varphi})+\sum_{k,i=1}^M\partial_{x_i}\partial_{x_k}(H_{ki}\overline{\varphi})\nonumber\\
&&\quad+\sum_{k=1}^M\partial_{x_k}\del_t(E_k\overline{\varphi})-\sum_{k=1}^M\partial_{x_k}(D_k\overline{\varphi})-\overline{\varphi}\Big]h^2dx\nonumber\\
&&\quad+\frac{1}{2}\int_{\RR^M}\Big[-\del_t(A\varphi)
-\sum_{k=1}^M\partial_{x_k}(E_k\varphi)+(2C-|D|-1)\varphi-(2A+|E|)\overline{\varphi}\nonumber\\
&&\quad-\sum_{k=1}^M|\partial_{x_k}(B\varphi)|\Big](h_t)^2dx\nonumber\\
&&\quad+\frac{1}{2}\int_{\RR^M}\Big[(2B-2|H|-|E|)\overline{\varphi}-\del_t(B\varphi)-\varphi|D|-\sum_{k=1}^M|\partial_{x_k}(B\varphi)|\Big]|\nabla h|^2dx
\nonumber\\
&&\lesssim{1\over2}\int_{\RR^M}(\varphi+\overline{\varphi})f^2dx.
\end{eqnarray}

Now we analyse coefficients of inequality (\ref{TT1}). By the assumption given in (\ref{NYN0-2})-(\ref{NYN0-3}) and (\ref{NYN0-6}), it holds
$$
\aligned
&A(\varphi-\overline{\varphi})>c\overline{\varphi} A>c\sigma\overline{\varphi},\\
&\varphi(B-|H|)>\varphi(\sigma-{1\over2}),
\endaligned
$$
and by (\ref{NYN0-04}) and (\ref{NYN0-7}), we have
$$
\aligned
&-\pdt(A\overline{\varphi})+(C-A)\overline{\varphi}-\sum_{k=1}^M\partial_{x_k}(E_k\overline{\varphi})\\
&\geq \Big(A(c-1)-A_t+C\Big)\overline{\varphi}-\sum_{k=1}^M\partial_{x_k}(E_k\overline{\varphi})\\
&\geq \Big(\sigma(c-1)-3\eps\Big)\overline{\varphi}>0,
\endaligned
$$
thus it holds
\begin{eqnarray}
\label{yw1-1}
&&\int_{\RR^M}\Big[A(\varphi-\overline{\varphi})(h_t)^2
+\varphi(B-|H|)|\nabla h|^2+\Big(-\pdt(A\overline{\varphi})+(C-A)\overline{\varphi}-\sum_{k=1}^M\partial_{x_k}(E_k\overline{\varphi})\Big)h^2
\Big]dx\nonumber\\
&&\gtrsim\int_{\RR^M}\Big[c\sigma\overline{\varphi}(h_t)^2
+\varphi(\sigma-{1\over2})|\nabla h|^2+\overline{\varphi}\Big(\sigma(c-1)-3\eps\Big) h^2
\Big]dx.
\end{eqnarray}

Similarly, using (\ref{NYN0-2})-(\ref{NYN0-6}), there exists a positive constant $c_0$ such that
\begin{eqnarray*}
\pdt^2(A\overline{\varphi})&-&\Delta(B\overline{\varphi})-\pdt(C\overline{\varphi})+\sum_{k,i=1}^M\partial_{x_i}\partial_{x_k}(H_{ki}\overline{\varphi})+\sum_{k=1}^M\partial_{x_k}\del_t(E_k\overline{\varphi})-\sum_{k=1}^M\partial_{x_k}(D_k\overline{\varphi})-\overline{\varphi}\nonumber\\
&\geq&A\overline{\varphi}_{tt}-B\Delta\overline{\varphi}-C\overline{\varphi}_t-c_0\eps\overline{\varphi}\nonumber\\
&\geq&\Big(c^2\sigma-c_0(\sigma+1)\eps\Big)\overline{\varphi}>0,
\end{eqnarray*}
and
\begin{eqnarray*}
-\del_t(A\varphi)
&-&\sum_{k=1}^M\partial_{x_k}(E_k\varphi)+(2C-|D|-1)\varphi-(2A+|E|)\overline{\varphi}-\sum_{k=1}^M|\partial_{x_k}(B\varphi)|\nonumber\\
&\geq&A(-\varphi_t-2\overline{\varphi})+(2C-1-5\eps)\varphi\nonumber\\
&\geq&c(c\sigma+2C-1-5\eps)\overline{\varphi}\nonumber\\
&\geq&c(c\sigma-1-5\eps)\overline{\varphi}>0,\quad for~~ c\sigma>1+5\eps,
\end{eqnarray*}
and
\begin{eqnarray*}
(2B&-&2|H|-|E|)\overline{\varphi}-\del_t(B\varphi)-\varphi|D|-\sum_{k=1}^M|\partial_{x_k}(B\varphi)|\\
&\geq&(2\sigma+c^2\sigma-1-c_0\eps)\overline{\varphi}>0,
\end{eqnarray*}
thus it holds
\begin{eqnarray}\label{yw1-2}
\frac{1}{2}\int_{\RR^M}\Big[\pdt^2(A\overline{\varphi})&-&\Delta(B\overline{\varphi})-\pdt(C\overline{\varphi})+\sum_{k,i=1}^M\partial_{x_i}\partial_{x_k}(H_{ki}\overline{\varphi})\nonumber\\
&&\quad+\sum_{k=1}^M\partial_{x_k}\del_t(E_k\overline{\varphi})-\sum_{k=1}^M\partial_{x_k}(D_k\overline{\varphi})-\overline{\varphi}\Big]h^2dx\nonumber\\
&\geq&\frac{1}{2}\int_{\RR^M}\Big(c^2\sigma-c_0(\sigma+1)\eps\Big)\overline{\varphi}h^2dx,
\end{eqnarray}
and
\begin{eqnarray}\label{yw1-3}
\frac{1}{2}\int_{\RR^M}\Big[-\del_t(A\varphi)
&-&\sum_{k=1}^M\partial_{x_k}(E_k\varphi)+(2C-|D|-1)\varphi-(2A+|E|)\overline{\varphi}-\sum_{k=1}^M|\partial_{x_k}(B\varphi)|\Big]h_t^2dx\nonumber\\
&\geq&\frac{1}{2}\int_{\RR^M}c\Big(c\sigma-1-5\eps\Big)\overline{\varphi}h_t^2dx,
\end{eqnarray}
and
\begin{eqnarray}\label{yw1-4}
\frac{1}{2}\int_{\RR^M}\Big[(2B&-&2|H|-|E|)\overline{\varphi}-\del_t(B\varphi)-\varphi|D|-\sum_{k=1}^M|\partial_{x_k}(B\varphi)|\Big]|\nabla h|^2dx\nonumber\\
&\geq&\frac{1}{2}\int_{\RR^M}\Big(2\sigma+c^2\sigma-1-c_0\eps\Big)\overline{\varphi}|\nabla h|^2dx.
\end{eqnarray}

For a given $c_0$, we define
$$
C_{c,\sigma,\eps}:=\min\{c^2\sigma-c_0(\sigma+1)\eps,~c(c\sigma-1-5\eps),~2\sigma+c^2\sigma-1-c_0\eps\},
$$
then, by (\ref{yw1-1})-(\ref{yw1-4}), we can deduce (\ref{TT1}) into
\begin{eqnarray}
\label{yw1-5}
{d\over dt}\int_{\RR^M}\overline{\varphi}\Big((h_t)^2+|\nabla h|^2+h^2
\Big)dx&+&C_{c,\sigma,\eps}\int_{\RR^M}\overline{\varphi}\Big((h_t)^2
+|\nabla h|^2+ h^2\Big)dx\nonumber\\
&\lesssim&\int_{\RR^M}(\varphi+\overline{\varphi})f^2dx.
\end{eqnarray}
Hence we can apply Gronwall's inequality to (\ref{yw1-5}) to obtain
\begin{eqnarray*}
\int_{\RR^M}\overline{\varphi}\Big((h_t)^2
&+&|\nabla h|^2+ h^2\Big)dx\\
&\lesssim& e^{-C_{c,\sigma,\eps}t}\Big[\int_{\RR^M}\Big(h_1^2
+|\nabla h_0|^2+ h_0^2\Big)dx+\int_0^{\infty}\int_{\RR^M}(\varphi+\overline{\varphi})f^2dxdt\Big].
\end{eqnarray*}

\end{proof}

A direct application of lemma 2.1 is to derive the $\LL^2$ estimate of solution for the linear equation (\ref{NYN0-1}).

\begin{lemma}
Let $f\in \CC((0,\infty); \LL^2(\RR^M))$. Assume that (\ref{NYN0-2})-(\ref{NYN0-4}) hold. Then the solution of linear wave equation (\ref{NYN0-1}) satisfies
$$
\int_{\RR^M}\Big((h_t)^2
+|\nabla h|^2+ h^2\Big)dx\lesssim e^{-\eps t}\Big[\int_{\RR^M}\Big(h_0^2
+|\nabla h_0|^2+ h_1^2\Big)dx+\int_0^{\infty}\int_{\RR^M}f^2dxdt\Big].
$$
\end{lemma}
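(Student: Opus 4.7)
The plan is to derive Lemma~2.2 as a direct corollary of Lemma~2.1 by stripping the weight $\overline{\varphi}$ from both sides of the weighted estimate. The key ingredient is the tailored compatibility condition (\ref{NYN0-06}), which precisely relates the weight $\overline{\varphi}$, the decay rate $C_{c,\sigma,\eps}$ from Lemma~2.1, and the target rate $\eps$ appearing in the conclusion.

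First I would start from the conclusion of Lemma~2.1 and convert the weighted $\LL^2$ norm on the left into an unweighted one. Since $\overline{\varphi}$ is pointwise positive,
$$
\int_{\RR^M}\bigl((h_t)^2+|\nabla h|^2+h^2\bigr)dx\leq \sup_{x\in\RR^M}\overline{\varphi}^{-1}(t,x)\int_{\RR^M}\overline{\varphi}\bigl((h_t)^2+|\nabla h|^2+h^2\bigr)dx.
$$
Combining this with Lemma~2.1 produces a prefactor of $\sup_{x}\overline{\varphi}^{-1}(t,x)\cdot e^{-C_{c,\sigma,\eps}t}$ on the right. By assumption (\ref{NYN0-06}), this prefactor is bounded by $e^{-\eps t}$, which gives exactly the time-decay rate claimed in the statement.

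Next I would absorb the weights on the data/forcing side. Since $\overline{\varphi}$ (and $\varphi$, which appears via the Cauchy inequalities \eqref{w3} and \eqref{w7} in the proof of Lemma~2.1) is smooth and bounded on $\RR^+\times\RR^M$, one has uniform bounds $\overline{\varphi}(0,x)\lesssim 1$ and $\varphi(t,x)+\overline{\varphi}(t,x)\lesssim 1$. Hence
$$
\int_{\RR^M}\overline{\varphi}(0,x)\bigl(h_1^2+|\nabla h_0|^2+h_0^2\bigr)dx\lesssim \int_{\RR^M}\bigl(h_0^2+|\nabla h_0|^2+h_1^2\bigr)dx,
$$
and similarly $\int_0^{\infty}\!\!\int_{\RR^M}\overline{\varphi}f^2\,dxdt\lesssim \int_0^{\infty}\!\!\int_{\RR^M}f^2\,dxdt$. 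Assembling the two reductions yields the desired inequality.

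I do not anticipate any real obstacle here, since the analytic work—handling the damping term, commuting derivatives with the weights, and verifying positivity of the many quadratic forms under the smallness hypothesis $\|\partial^s A\|_{\LL^\infty}\sim\eps$ etc.—has already been carried out in the proof of Lemma~2.1. Lemma~2.2 is essentially a packaging statement: it repackages the weighted estimate into an unweighted Sobolev-type bound via the calibration (\ref{NYN0-06}) between $\overline{\varphi}$ and $e^{-\eps t}$, trading the slightly stronger decay rate $C_{c,\sigma,\eps}$ of Lemma~2.1 for the weaker but cleaner rate $\eps$ needed downstream in the Nash--Moser iteration.
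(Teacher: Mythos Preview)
Your proposal is correct and takes essentially the same approach as the paper: both derive Lemma~2.2 directly from Lemma~2.1 by removing the weight via the compatibility condition~(\ref{NYN0-06}). The only cosmetic difference is that the paper makes the concrete choice $\varphi(t,x)=2e^{-ct}$, $\overline{\varphi}(t,x)=e^{-ct}$ (for which (\ref{NYN0-5})--(\ref{NYN0-06}) are immediate and $\overline{\varphi}^{-1}$ is a function of $t$ alone), whereas you argue abstractly with a general admissible weight; either way the passage from Lemma~2.1 to Lemma~2.2 is a one-line unpacking.
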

\begin{proof}
For simple, we can take weighted functions
$$
\varphi(t,x):=2e^{-ct},\quad \overline{\varphi}(t,x):=e^{-ct},
$$
with the positive constant $c>1$. One can see that assumptions (\ref{NYN0-5})-(\ref{NYN0-06}) holds.
Then the proof can be shown by following from the proof of lemma 2.1.
\end{proof}

Next, we derive $H^s$-estimates for any $s\geq1$ and $s\in\NN$. In order to keep a similar structure with (\ref{NYN0-1}), we rewrite it as
\bel{-A}
\aligned
h_{tt}&-A^{-1}(t,x)B(t,x)\Delta h+A^{-1}(t,x)C(t,x)h_t+\sum_{k=1}^MA^{-1}(t,x)D_k(t,x)\partial_{x_k}h\\
&+\sum_{k=1}^MA^{-1}(t,x)E_k(t,x)\partial_{x_k}h_t+\sum_{i,k=1}^MA^{-1}(t,x)H_{ki}(t,x)\partial_{x_k}\partial_{x_i}h
=A^{-1}(t,x)f(t,x),~~~~~~~~~
\endaligned
\ee
then we apply $\del_{x_j}^s$ $(\forall j=1,2,\ldots, M)$ to both sides of (\ref{-A}), then we get the linear equation as follows
\bel{E3-14Rr1}
\aligned
\del_{tt}\del_{x_j}^sh&-A^{-1}(t,x)B(t,x)\Delta \del_{x_j}^sh+A^{-1}(t,x)C(t,x)\del_t\del_{x_j}^sh+\sum_{k=1}^MA^{-1}(t,x)D_k(t,x)\partial_{x_k}\del_{x_j}^sh\\
&+\sum_{k=1}^MA^{-1}(t,x)E_k(t,x)\partial_{x_k}\del_t\del_{x_j}^sh+\sum_{i,k=1}^MA^{-1}(t,x)H_{ki}(t,x)\partial_{x_k}\partial_{x_i}\del_{x_j}^sh
=g_s(t,x),~~~~~~~~~
\endaligned
\ee
where
\begin{eqnarray}
\label{E3-14rr0000}
g_s(t,x)&:=&\del_{x_j}^s\Big(A^{-1}f\Big)+\sum\limits_{\substack{s_1+s_2=s\\1\leq s_1\leq s\\0\leq s_2\leq s-1}}
\dbinom{s_2}{s}\del_{x_j}^{s_1}(A^{-1}B)\Delta \del_{x_j}^{s_2}h
-\sum\limits_{\substack{s_1+s_2=s\\1\leq s_1\leq s\\0\leq s_2\leq s-1}}
\dbinom{s_2}{s}\del_{x_j}^{s_1}(A^{-1}C)\del_t\del_{x_j}^{s_2}h\nonumber\\
&&-\sum\limits_{k=1}^M\sum\limits_{\substack{s_1+s_2=s\\1\leq s_1\leq s\\0\leq s_2\leq s-1}}
\dbinom{s_2}{s}\del_{x_j}^{s_1}(A^{-1}D_k)\partial_{x_k}\del_{x_j}^{s_2}h
-\sum\limits_{k=1}^M\sum\limits_{\substack{s_1+s_2=s\\1\leq s_1\leq s\\0\leq s_2\leq s-1}}
\dbinom{s_2}{s}\del_{x_j}^{s_1}(A^{-1}E_k)\partial_{x_k}\del_t\del_{x_j}^{s_2}h\nonumber\\
&&-\sum\limits_{i,k=1}^M\sum\limits_{\substack{s_1+s_2=s\\1\leq s_1\leq s\\0\leq s_2\leq s-1}}
\dbinom{s_2}{s}\del_{x_j}^{s_1}(A^{-1}H_{ki})\partial_{x_k}\partial_{x_i}\del_{x_j}^{s_2}h,
\end{eqnarray}
with the symbol
$$
\dbinom{s_2}{s}={s!\over s_1!s_2!}.
$$

Then we have the following priori estimate.

\begin{lemma}
Let $f\in \CC^1((0,\infty); H^s(\RR^M))$. Then the solution of linear wave equation (\ref{NYN0-1}) satisfies
\begin{eqnarray*}
&&\sum_{j=1}^M\int_{\RR^M}\overline{\varphi}\Big((\del_t\del_{x_j}^sh)^2
+|\nabla\del_{x_j}^s h|^2+ (\del_{x_j}^sh)^2\Big)dx\\
&\lesssim& e^{-C_{c,\sigma,\eps}t}\sum_{j=1}^M\sum_{\theta=0}^s\Big[\int_{\RR^M}\Big((\del_{x_j}^{\theta}h(0,x))^2
+|\nabla\del_{x_j}^{\theta} h(0,x)|^2+ (\del_{x_j}^{\theta}h(0,x))^2\Big)dx\\
&&\quad+\int_0^{\infty}\int_{\RR^M}\Big(\varphi+\overline{\varphi}\Big)(\del_{x_j}^{\theta}f)^2dxdt\Big].
\end{eqnarray*}
\end{lemma}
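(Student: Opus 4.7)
The natural strategy is to treat $v_j^{(s)} := \del_{x_j}^s h$ as an unknown satisfying an equation of exactly the same structural type as \eqref{NYN0-1}, with a modified right-hand side $g_s(t,x)$, and then apply Lemma 2.1 to it. More precisely, first I would normalize to the form \eqref{-A} (dividing by $A$, which is legal because $A>\sigma>1$), verify that the new coefficients $A^{-1}B$, $A^{-1}C$, $A^{-1}D_k$, $A^{-1}E_k$, $A^{-1}H_{ki}$ still satisfy the structural assumptions \eqref{NYN0-2}--\eqref{NYN0-04}: boundedness and positivity follow from $A>\sigma>1$; the $\eps$-smallness of derivatives is preserved by the Leibniz rule since $\|\del^r A^{-1}\|_{\LL^\infty}\lesssim \|\del^r A\|_{\LL^\infty}\sim\eps$. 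After applying $\del_{x_j}^s$ we arrive at \eqref{E3-14Rr1}, and then Lemma 2.1 delivers
\[
\int_{\RR^M}\overline{\varphi}\Big((\del_t v_j^{(s)})^2+|\nabla v_j^{(s)}|^2+(v_j^{(s)})^2\Big)dx
\lesssim e^{-C_{c,\sigma,\eps}t}\Big[\text{initial data}+\int_0^\infty\!\int_{\RR^M}(\varphi+\overline{\varphi})g_s^2\,dxdt\Big].
\]

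Next I would estimate $g_s$ from the explicit formula \eqref{E3-14rr0000}. The first term $\del_{x_j}^s(A^{-1}f)$ expands by Leibniz into $\del_{x_j}^s f$ plus terms where at least one derivative falls on $A^{-1}$, hence carries a factor $\eps$ by \eqref{NYN0-04}; together these give at most $\sum_{\theta=0}^s(\del_{x_j}^\theta f)^2$ up to multiplicative constants. Every other term in $g_s$ has the crucial feature that the coefficient is hit by at least one derivative ($s_1\geq 1$), so by \eqref{NYN0-04} each such commutator factor is $O(\eps)$, multiplied by strictly lower-order space-derivatives of $h$ (with $s_2\leq s-1$) and at most one $\del_t$. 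Thus $\int_{\RR^M}(\varphi+\overline{\varphi})g_s^2\,dx$ is bounded by
\[
\sum_{\theta=0}^s\int_{\RR^M}\overline{\varphi}(\del_{x_j}^{\theta}f)^2\,dx
\;+\;C\eps^2\sum_{\theta=0}^{s-1}\int_{\RR^M}\overline{\varphi}\Big((\del_t\del_{x_j}^{\theta}h)^2+|\nabla\del_{x_j}^{\theta} h|^2+(\del_{x_j}^{\theta}h)^2\Big)dx.
\]

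I would then close the argument by induction on $s$. The base case $s=0$ is Lemma 2.1 itself (summed trivially), and for the inductive step the lower-order weighted-energy terms coming out of $g_s^2$ are already controlled, by the induction hypothesis, by the right-hand side of the claimed inequality at levels $\leq s-1$; since they carry the prefactor $\eps^2$, after summing over $j$ one obtains the desired bound with a constant uniform in $\eps$ small. Alternatively one can insert the $\eps^2$ term into the Gronwall step of Lemma 2.1's proof and simply adjust $C_{c,\sigma,\eps}$ by $O(\eps)$, which is compatible with the assumption \eqref{NYN0-06}.

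The main obstacle is the apparent loss of one spatial derivative on $h$ in the commutator terms (the $\Delta\del_{x_j}^{s_2}h$ piece with $s_2$ potentially equal to $s-1$). This is exactly the place where the smoothness hypothesis \eqref{NYN0-04} is crucial: because the coefficient derivative $\del_{x_j}^{s_1}(A^{-1}B)$ has $s_1\geq 1$ and is therefore of size $\eps$, we may absorb this top-order piece either via the induction hypothesis at the same level (treating it as a perturbation of the principal part on the left) or by a smallness argument. This mechanism is the reason why Lemma 2.1's proof, not just its statement, must be mildly revisited in order to obtain the $H^s$-estimate, and is the only step that does not follow by pure Leibniz bookkeeping.
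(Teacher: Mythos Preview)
Your proposal is correct and follows essentially the same route as the paper: differentiate the normalized equation \eqref{-A}, re-run the weighted multiplier argument of Lemma~2.1 (the paper carries this out explicitly rather than quoting the lemma as a black box, for precisely the reason you anticipate in your final paragraph---after dividing by $A$ the leading coefficient becomes $1$ and the hypotheses \eqref{NYN0-2} are not literally met), and close by induction on $s$ using the $\eps$-smallness of the commutator coefficients to control the lower-order remainder $R_j$. Your identification of the top-order commutator piece $\Delta\del_{x_j}^{s-1}h$ as the only nontrivial step, resolved by its $O(\eps)$ prefactor, matches the paper's handling of the $R_j$ terms exactly.
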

\begin{proof}
This proof is based on the induction. Let $s=1$ in equation (\ref{E3-14Rr1}). Then
\bel{fg0-1}
\aligned
g_1(t,x)&:=\del_{x_j}\Big(A^{-1}f\Big)+\del_{x_j}(A^{-1}B)\Delta h-\del_{x_j}(A^{-1}C)\del_th-\sum_{k=1}^M\del_{x_j}(A^{-1}D_k)\partial_{x_k}h\\
&\quad-\sum_{k=1}^M\del_{x_j}(A^{-1}E_k)\partial_{x_k}\del_th-\sum_{i,k=1}^M\del_{x_j}(A^{-1}H_{ki})\partial_{x_k}\partial_{x_i}h.
\endaligned
\ee
We notice that linear equation (\ref{E3-14Rr1}) admits the same structure with the linear equation (\ref{NYN0-1}).
So we can multiply both sides of equation (\ref{E3-14Rr1}) with $\varphi\del_{x_j}h$ and $\overline{\varphi}\del_{x_j}\del_th$, respectively, then ultilizing the same process of getting
(\ref{TT1}), we derive
\begin{eqnarray}\label{TT001}
&&\frac{1}{2}\frac{d}{dt}\int_{\RR^M}\Big[(\varphi-\overline{\varphi})(\del_{x_j}\del_th)^2
+\varphi A^{-1}(B-|H|)|\nabla\del_{x_j} h|^2\nonumber\\
&&\quad+\Big(-\overline{\varphi}_t+A^{-1}(C-A)\overline{\varphi}-\sum_{k=1}^M\partial_{x_k}(A^{-1}E_k\overline{\varphi})\Big)(\del_{x_j}h)^2
\Big]dx\nonumber\\
&&\quad+\frac{1}{2}\int_{\RR^M}\Big[\overline{\varphi}_{tt}-\Delta(A^{-1}B\overline{\varphi})-\pdt(A^{-1}C\overline{\varphi})+\sum_{k,i=1}^M\partial_{x_i}\partial_{x_k}(A^{-1}H_{ki}\overline{\varphi})\nonumber\\
&&\quad+\sum_{k=1}^M\partial_{x_k}\del_t(A^{-1}E_k\overline{\varphi})-\sum_{k=1}^M\partial_{x_k}(A^{-1}D_k\overline{\varphi})\Big](\del_{x_j}h)^2dx\nonumber\\
&&\quad+\frac{1}{2}\int_{\RR^M}\Big[-\varphi_t
-\sum_{k=1}^M\partial_{x_k}(A^{-1}E_k\varphi)+A^{-1}(2C-|D|)\varphi-A^{-1}(2A+|E|)\overline{\varphi}\nonumber\\
&&\quad
-\sum_{k=1}^M|\del_{x_k}(A^{-1}B\varphi)|\Big](\del_t\del_{x_j}h)^2dx\nonumber\\
&&\quad+\frac{1}{2}\int_{\RR^M}\Big[A^{-1}(2B-2|H|-|E|)\overline{\varphi}-\del_t(A^{-1}B\varphi)-\varphi A^{-1}|D|-\sum_{k=1}^M|\del_{x_k}(A^{-1}B\varphi)|\Big]|\nabla\del_{x_j} h|^2dx
\nonumber\\
&&\leq \int_{\RR^M}(\varphi\del_{x_j}h_t+\overline{\varphi}\del_{x_j}h)g_1dx.
\end{eqnarray}

We now estimate the right hand side term of (\ref{TT001}). Note that (\ref{fg0-1}). Upon Cauchy inequality, it holds
\begin{eqnarray*}\label{g0f1}
\int_{\RR^M}\varphi\partial_{x_j}(A^{-1}f)\del_t\del_{x_j}hdx
&\leq&\frac{1}{2}\int_{\RR^M}\varphi\Big((\partial_{x_j}(A^{-1}f))^2+(\del_t\del_{x_j}h)^2\Big)dx,\\
\int_{\RR^M}\varphi\partial_{x_j}(A^{-1}B)\Delta h\del_t\del_{x_j}hdx
&\leq&\frac{1}{2}\int_{\RR^M}\varphi|\partial_{x_j}(A^{-1}B)|\Big((\Delta h)^2+(\del_t\del_{x_j}h)^2\Big)dx,\\
-\int_{\RR^M}\varphi\partial_{x_j}(A^{-1}C)h_t\del_t\del_{x_j}hdx
&=&\frac{1}{2}\int_{\RR^M}\partial_{x_j}(\varphi\partial_{x_j}(A^{-1}C))(h_t)^2dx,\\
-\sum_{k=1}^M\int_{\RR^M}\varphi\partial_{x_j}(A^{-1}D_k)\partial_{x_k} h\del_t\del_{x_j}hdx
&\leq&\frac{1}{2}\sum_{k=1}^M\int_{\RR^M}\varphi|\partial_{x_j}(A^{-1}D_k)|\Big((\del_{x_k}h)^2+(\del_t\del_{x_j}h)^2\Big)dx,\\
-\sum_{k=1}^M\int_{\RR^M}\varphi\partial_{x_j}(A^{-1}E_k)\partial_{x_k}h_t\del_t\del_{x_j}hdx
&\leq&{1\over2}\sum_{k=1}^M\int_{\RR^M}\varphi|\partial_{x_j}(A^{-1}E_k)|\Big((\del_t\del_{x_k}h)^2+(\del_t\del_{x_j}h)^2\Big)dx,\\
\int_{\RR^M}\overline{\varphi}\partial_{x_j}(A^{-1}f)\del_{x_j}hdx
&\leq&\frac{1}{2}\int_{\RR^M}\overline{\varphi}\Big((\partial_{x_j}(A^{-1}f))^2+(\del_{x_j}h)^2\Big)dx,\\
\int_{\RR^M}\overline{\varphi}\partial_{x_j}(A^{-1}B)\Delta h\del_{x_j}hdx
&\leq&\frac{1}{2}\int_{\RR^M}\int_{\RR^M}\overline{\varphi}|\partial_{x_j}(A^{-1}B)|\Big((\Delta h)^2+(\del_{x_j}h)^2\Big)dx,\\
-\int_{\RR^M}\overline{\varphi}\partial_{x_j}(A^{-1}C)h_t\del_{x_j}hdx
&\leq&\frac{1}{2}\int_{\RR^M}\overline{\varphi}|\partial_{x_j}( A^{-1}C)|\Big((h_t)^2+(\del_{x_j}h)^2\Big)dx,\\
-\sum_{k=1}^M\int_{\RR^M}\overline{\varphi}\partial_{x_j}(A^{-1}D_k)\partial_{x_k} h\del_{x_j}hdx
&\leq&{1\over2}\sum_{k=1}^M\int_{\RR^M}\overline{\varphi}|\partial_{x_j}(A^{-1}D_k)|\Big((\del_{x_k}h)^2+(\del_{x_j}h)^2\Big)dx,
\end{eqnarray*}
and
\begin{eqnarray*}
&&-\sum_{k=1}^M\int_{\RR^M}\overline{\varphi}\partial_{x_j}(A^{-1}E_k)\partial_{x_k}h_t\del_{x_j}hdx\\
&\leq&\frac{1}{2}\sum_{k=1}^M\int_{\RR^M}\overline{\varphi}|\partial_{x_j}(A^{-1}E_k)|\Big( (\del_t\del_{x_k}h)^2+(\del_{x_j}h)^2\Big)dx,\\
&&-\sum_{k,i=1}^M\int_{\RR^M}\overline{\varphi}\partial_{x_j}(A^{-1}H_{ki})\partial_{x_k}\partial_{x_i}h\del_{x_j}hdx\\
&\leq&\frac{1}{2}\sum_{k,i=1}^M\int_{\RR^M}\overline{\varphi}|\partial_{x_j}(A^{-1}H_{ki})|\Big((\del_{x_k}\del_{x_i}h)^2+(\del_{x_j}h)^2\Big)dx,\\
&&-\sum_{k,i=1}^M\int_{\RR^M}\varphi\partial_{x_j}(A^{-1}H_{ki})\partial_{x_k}\partial_{x_i}h\del_t\del_{x_j}hdx\\
&\leq&\frac{1}{2}\sum_{k,i=1}^M\int_{\RR^M}\varphi|\partial_{x_j}(A^{-1}H_{ki})|\Big((\del_{x_k}\del_{x_i}h)^2+(\del_t\del_{x_j}h)^2\Big)dx,
\end{eqnarray*}
thus, based on above estimates,  we sum up (\ref{TT001}) from $j=1$ to $j=M$, then it reduces into
\begin{eqnarray}\label{0TT002}
&&\frac{1}{2}\frac{d}{dt}\sum_{j=1}^M\int_{\RR^M}\Big[(\varphi-\overline{\varphi})(\del_{x_j}\del_th)^2
+\varphi A^{-1}(B-|H|)|\nabla\del_{x_j} h|^2\nonumber\\
&&\quad+\Big(-\overline{\varphi}_t+A^{-1}(C-A)\overline{\varphi}-\sum_{k=1}^M\partial_{x_k}(A^{-1}E_k\overline{\varphi})\Big)(\del_{x_j}h)^2
\Big]dx
+\frac{1}{2}\sum_{j=1}^M\int_{\RR^M}\overline{A}_1(t,x)(\del_{x_j}h)^2dx\nonumber\\
&&\quad
+\frac{1}{2}\sum_{j=1}^M\int_{\RR^M}\overline{A}_2(t,x)(\del_t\del_{x_j}h)^2dx
+\frac{1}{2}\sum_{j=1}^M\int_{\RR^M}\overline{A}_3(t,x)|\nabla\del_{x_j} h|^2dx
\nonumber\\
&&\leq \frac{1}{2}\sum_{j=1}^M\int_{\RR^M}(\varphi+\overline{\varphi})(\del_{x_j}(A^{-1}f))dx
+\frac{1}{2}\int_{\RR^M}(\varphi+\overline{\varphi})\sum_{j,k=1}^M|\partial_{x_j}(A^{-1}D_k)||\nabla h|^2dx\nonumber\\
&&\quad+\frac{1}{2}\sum_{j=1}^M\int_{\RR^M}\Big[\partial_{x_j}(\varphi\partial_{x_j}(A^{-1}C))+\overline{\varphi}|\partial_{x_j}( A^{-1}C)|\Big](h_t)^2dx,~~~~~~~
\end{eqnarray}
where
\begin{eqnarray*}
\overline{A}_1(t,x)&:=&\overline{\varphi}_{tt}-\Delta(A^{-1}B\overline{\varphi})-\pdt(A^{-1}C\overline{\varphi})
+\sum_{k,i=1}^M\partial_{x_i}\partial_{x_k}(A^{-1}H_{ki}\overline{\varphi})+\sum_{k=1}^M\partial_{x_k}\del_t(A^{-1}E_k\overline{\varphi})\nonumber\\
&&\quad-\sum_{k=1}^M\partial_{x_k}(A^{-1}D_k\overline{\varphi})
-\overline{\varphi}\Big(1+|\partial_{x_j}(A^{-1}B)|+|\partial_{x_j}( A^{-1}C)|\nonumber\\
&&\quad+\sum_{k=1}^M|\partial_{x_j}(A^{-1}D_k)|+\sum_{k=1}^M|\partial_{x_j}(A^{-1}E_k)|+\sum_{k,i=1}^M|\partial_{x_j}(A^{-1}H_{ki})|\Big),\\
\overline{A}_2(t,x)&:=&-\varphi_t-\sum_{k=1}^M\partial_{x_k}(A^{-1}E_k\varphi)
+\varphi A^{-1}(2C-|D|)-A^{-1}(2A+|E|)\overline{\varphi}
-\sum_{k=1}^M|\del_{x_k}(A^{-1}B\varphi)|\nonumber\\
&&\quad
-(\varphi+\overline{\varphi})\sum_{k=1}^M|\partial_{x_k}(A^{-1}E_j)|
+\varphi\Big(-1-|\partial_{x_j}(A^{-1}B)|-\sum_{k=1}^M|\partial_{x_j}(A^{-1}D_k)|\nonumber\\
&&\quad
-\sum_{k=1}^M|\partial_{x_j}(A^{-1}E_k)|-\sum_{k,i=1}^M|\partial_{x_j}(A^{-1}H_{ki})|\Big),\\
\overline{A}_3(t,x)&:=&A^{-1}(2B-2|H|-|E|)\overline{\varphi}-\del_t(A^{-1}B\varphi)
-\varphi A^{-1}|D|-\sum_{k=1}^M|\del_{x_k}(A^{-1}B\varphi)|\\
&&\quad-(\varphi+\overline{\varphi})\Big(|\partial_{x_j}(A^{-1}B)|+\sum_{k,i=1}^M|\partial_{x_i}(A^{-1}H_{kj})|\Big).
\end{eqnarray*}

We now analyze all of coefficients for inequality (\ref{0TT002}).
By the assumption given in (\ref{NYN0-2})-(\ref{NYN0-3}) and (\ref{NYN0-6}), it holds
$$
\aligned
&\varphi-\overline{\varphi}>c\overline{\varphi} ,\\
&\varphi A^{-1}(B-|H|)>\varphi(\sigma-{1\over2}),
\endaligned
$$
and by (\ref{NYN0-04}) and (\ref{NYN0-7}), we have
$$
\aligned
&\quad-\overline{\varphi}_t+A^{-1}(C-A)\overline{\varphi}-\sum_{k=1}^M\partial_{x_k}(A^{-1}E_k\overline{\varphi})\\
&=
A^{-1}\Big(-\pdt(A\overline{\varphi})+(C-A)\overline{\varphi}-\sum_{k=1}^M\partial_{x_k}(E_k\overline{\varphi})\Big)
+(A^{-1}A_t\overline{\varphi}+\sum_{k=1}^M\del_{x_k}A^{-1}E_k\overline{\varphi})\\
&\geq A^{-1}\Big(\Big(A(c-1)-A_t+C\Big)\overline{\varphi}-\sum_{k=1}^M\partial_{x_k}(E_k\overline{\varphi})\Big)+o(\eps)\\
&\geq A^{-1}\Big(\sigma(c-1)-3\eps\Big)\overline{\varphi}+o(\eps)>0,
\endaligned
$$
thus it holds
\begin{eqnarray}
\label{c-t}
&&\int_{\RR^M}\Big[(\varphi-\overline{\varphi})(\del_t\del_{x_j}h)^2
+\varphi A^{-1}(B-|H|)|\nabla\del_{x_j}h|^2\nonumber\\
&&\quad
+\Big(-\overline{\varphi}_t
+A^{-1}(C-A)\overline{\varphi}-\sum_{k=1}^M\partial_{x_k}(A^{-1}E_k\overline{\varphi})\Big)(\del_{x_j}h)^2
\Big]dx\nonumber\\
&&\gtrsim\int_{\RR^M}\Big[c\overline{\varphi}(\del_t\del_{x_j}h)^2
+\varphi(\sigma-{1\over2})|\nabla\del_{x_j}h|^2
+\Big(\overline{\varphi}\Big(\sigma(c-1)-3\eps\Big)+o(\eps)\Big)(\del_{x_j}h)^2
\Big]dx.~~~~~~
\end{eqnarray}

Similarly, using (\ref{NYN0-2})-(\ref{NYN0-6}), there exists a positive constant $c_0$ such that
\begin{eqnarray*}
\overline{A}_1(t,x)&:=&
A^{-1}\Big(\pdt^2(A\overline{\varphi})-\Delta(B\overline{\varphi})
-\pdt(C\overline{\varphi})
+\sum_{k,i=1}^M\partial_{x_i}\partial_{x_k}(H_{ki}\overline{\varphi})
+\sum_{k=1}^M\del_t\partial_{x_k}(E_k\overline{\varphi})\nonumber\\
&&\quad-\sum_{k=1}^M\partial_{x_k}(D_k\overline{\varphi})-\overline{\varphi}\Big)-A^{-1}\del_{t}^2A\overline{\varphi}-A^{-1}A_t\overline{\varphi}_t
-\sum_{k=1}^M\del_{x_k}^2( A^{-1})B\overline{\varphi}\nonumber\\
&&\quad-\sum_{k=1}^M\del_{x_k} A^{-1}\del_{x_k}( B\overline{\varphi})-A^{-1}_tC\overline{\varphi}
+\sum_{k,i=1}^M\partial_{x_i}\partial_{x_k}A^{-1}H_{ki}\overline{\varphi}
+\sum_{k,i=1}^M\partial_{x_i}A^{-1}\partial_{x_k}(H_{ki}\overline{\varphi})\\
&&\quad+\sum_{k,i=1}^M\partial_{x_k}A^{-1}\partial_{x_i}(H_{ki}\overline{\varphi})
+\sum_{k=1}^M\del_t\partial_{x_k}A^{-1}E_k\overline{\varphi}
+\sum_{k=1}^M\partial_{x_k}A^{-1}\del_t(E_k\overline{\varphi})\\
&&\quad+\sum_{k=1}^M\del_tA^{-1}\partial_{x_k}(E_k\overline{\varphi})
-\sum_{k=1}^M\partial_{x_k}A^{-1}D_k\overline{\varphi}
+A^{-1}\overline{\varphi}
-\varphi\sum_{k=1}^M|\partial_{x_j}(A^{-1}D_k)|\nonumber\\
&&\quad
-\overline{\varphi}\Big(1+|\partial_{x_j}(A^{-1}B)|
+|\partial_{x_j}( A^{-1}C)|
+\sum_{k=1}^M|\partial_{x_j}(A^{-1}D_k)|\\
&&\quad+\sum_{k=1}^M|\partial_{x_j}(A^{-1}E_k)|+\sum_{k,i=1}^M|\partial_{x_j}(A^{-1}H_{ki})|\Big)\\
&\geq&A^{-1}\Big((c^2-1)\sigma-c_0(\sigma+1)\eps+1\Big)\overline{\varphi}+o(\eps)>0,
\end{eqnarray*}
and
\begin{eqnarray*}
&&\overline{A}_2(t,x)\\
&:=&A^{-1}\Big(-\del_t(A\varphi)
-\sum_{k=1}^M\partial_{x_k}(E_k\varphi)+(2C-|D|-1)\varphi-(2A+|E|)\overline{\varphi}
-\sum_{k=1}^M|\del_{x_k}(B\varphi)|
\Big)\nonumber\\
&&+A^{-1}A_t\varphi
-\sum_{k=1}^M\partial_{x_k}A^{-1}E_k\varphi
+A^{-1}\varphi-\sum_{k=1}^M|\del_{x_k}A^{-1}B\varphi|
-(\varphi+\overline{\varphi})\sum_{k=1}^M|\partial_{x_k}(A^{-1}E_j)|\\
&&
+\varphi\Big(-1-|\partial_{x_j}(A^{-1}B)|
-\sum_{k=1}^M|\partial_{x_j}(A^{-1}D_k)|-\sum_{k=1}^M|\partial_{x_j}(A^{-1}E_k)|
-\sum_{k,i=1}^M|\partial_{x_j}(A^{-1}H_{ki})|\Big)\\
&\geq&A^{-1}\Big((c^2-1)\sigma-c_0(\sigma+1)\eps\Big)\overline{\varphi}+o(\eps)>0,\\
\end{eqnarray*}
and
\begin{eqnarray*}
\overline{A}_3(t,x)&:=&A^{-1}\Big((2B-2|H|-|E|)\overline{\varphi}-\del_t(B\varphi)-\varphi|D|-\sum_{k=1}^M|\del_{x_k}(B\varphi)|\Big)
-A^{-1}_tB\varphi\\
&&-\sum_{k=1}^M|\del_{x_k}A^{-1}B\varphi|-\Big(\varphi+\overline{\varphi}\Big)\Big(|\partial_{x_j}(A^{-1}B)|+\sum_{k,i=1}^M|\partial_{x_j}(A^{-1}H_{ki})|\Big)\\
&\geq&A^{-1}(2\sigma+c^2\sigma-1-c_0\eps)\overline{\varphi}+o(\eps)>0,
\end{eqnarray*}
thus it holds
\begin{eqnarray*}\label{c1-A1}
\frac{1}{2}\int_{\RR^M}\overline{A}_1(t,x)(\del_{x_j}h)^2dx
\geq\int_{\RR^M}\Big[\Big(c^2\sigma-c_0(\sigma+1)\eps\Big)\overline{\varphi}+o(\eps)\Big](\del_{x_j}h)^2dx,\\
\frac{1}{2}\int_{\RR^M}\overline{A}_2(t,x)(\del_t\del_{x_j}^sh)^2dx
\geq\frac{1}{2}\int_{\RR^M}\Big[c(c\sigma-1-5\eps)\overline{\varphi}+o(\eps)\Big](\del_t\del_{x_j}h)^2dx,\\
\frac{1}{2}\int_{\RR^M}\overline{A}_3(t,x)|\nabla\del_{x_j}^s h|^2dx
\geq\frac{1}{2}\int_{\RR^M}\Big[\Big(2\sigma+c^2\sigma-1-c_0\eps\Big)\overline{\varphi}+o(\eps)\Big]|\nabla\del_{x_j} h|^2dx.
\end{eqnarray*}

Upon above estimates, there exists a positive constants $C_{c,\sigma,\eps}$ depending on $c$, $\sigma$ and $\eps$ such that
\begin{eqnarray*}
\overline{A}_1(t,x)\geq C_{c,\sigma,\eps},~~\overline{A}_2(t,x)\geq C_{c,\sigma,\eps},~~\overline{A}_3(t,x)\geq C_{c,\sigma,\eps},
\end{eqnarray*}
thus (\ref{0TT002}) is reduced into
\begin{eqnarray*}\label{TT002}
&&\int_{\RR^M}\overline{\varphi}\Big(\del_{x_j}\del_th)^2+|\nabla\del_{x_j} h|^2+(\del_{x_j}h)^2\Big)dx\\
&&\quad+C_{c,\sigma,\eps}\int_0^t\int_{\RR^M}\overline{\varphi}\Big(\del_{x_j}\del_th)^2+|\nabla\del_{x_j} h|^2+(\del_{x_j}h)^2\Big)dxds\nonumber\\
&&\lesssim\int_0^t\int_{\RR^M}(\varphi+\overline{\varphi})\Big(f^2+(\del_{x_j}f)^2+(h_t)^2\Big)dxds,~~~~~~~
\end{eqnarray*}
from which, upon Grownwall's inequality, it holds
\begin{eqnarray*}
\label{wyw01-1}
&&\sum_{j=1}^M\int_{\RR^M}\overline{\varphi}\Big((\del_t\del_{x_j}h)^2
+|\nabla\del_{x_j} h|^2+(\del_{x_j} h)^2\Big)dx\nonumber\\
&\lesssim& e^{-C_{c,\sigma,\eps}t}\sum_{\theta=0}^1\sum_{j=1}^M\Big[\int_{\RR^M}\overline{\varphi}(0,x)\Big((\del^{\theta}_{x_j}\del_th(0,x))^2
+|\nabla \del_{x_j}^{\theta}h(0,x)|^2+(\del_{x_j}^{\theta}h(0,x))^2\Big)dx\nonumber\\
&&+\int_0^{\infty}\int_{\RR^M}\overline{\varphi}\Big(f^2+(\del_{x_j}f)^2\Big)dxdt\Big],
\end{eqnarray*}
where we use the result of lemma 2.1 to estimate the term of $(h_t)^2$ and $|\nabla h|^2$.

Let $1\leq\theta\leq s-1$. We assume that
\begin{eqnarray}
\label{wyw01-1}
&&\sum_{j=1}^M\int_{\RR^M}\overline{\varphi}\Big((\del_t\del_{x_j}^{\theta}h)^2
+|\nabla\del_{x_j}^{\theta} h|^2+(\del_{x_j}^{\theta}h)^2\Big)dx\nonumber\\
&\lesssim& e^{-C_{c,\sigma,\eps}t}\sum_{\theta=0}^{s-1}\sum_{j=1}^M\Big[\int_{\RR^M}\overline{\varphi}(0,x)\Big((\del_{x_j}^{\theta}\del_th(0,x))^2
+|\nabla \del_{x_j}^{\theta}h(0,x)|^2+(\del_{x_j}^{\theta}h(0,x))^2\Big)dx\nonumber\\
&&+\int_0^{\infty}\int_{\RR^M}\overline{\varphi}(\del_{x_j}^{\theta}f)^2dxdt\Big]
\end{eqnarray}
holds. Then we prove the case $\theta=s$ also holds.

We multiply both sides of (\ref{E3-14Rr1}) with $\varphi(t,x)\del_t\del_{x_j}^s h$ and $\overline{\varphi}(t,x)\del_{x_j}^s h$, respectively, then similar to get \eqref{TT001}, it holds
\begin{eqnarray}\label{TT001s}
&&\frac{1}{2}\frac{d}{dt}\int_{\RR^M}\Big[(\varphi-\overline{\varphi})(\del_{x_j}^s\del_th)^2
+\varphi A^{-1}(B-|H|)|\nabla\del_{x_j}^s h|^2\nonumber\\
&&\quad+\Big(-\overline{\varphi}_t+A^{-1}(C-A)\overline{\varphi}-\sum_{k=1}^M\partial_{x_k}(A^{-1}E_k\overline{\varphi})\Big)(\del_{x_j}^sh)^2
\Big]dx\nonumber\\
&&\quad+\frac{1}{2}\int_{\RR^M}\Big[\overline{\varphi}_{tt}-\Delta(A^{-1}B\overline{\varphi})-\pdt(A^{-1}C\overline{\varphi})+\sum_{k,i=1}^M\partial_{x_i}\partial_{x_k}(A^{-1}H_{ki}\overline{\varphi})\nonumber\\
&&\quad+\sum_{k=1}^M\partial_{x_k}\del_t(A^{-1}E_k\overline{\varphi})-\sum_{k=1}^M\partial_{x_k}(A^{-1}D_k\overline{\varphi})\Big](\del_{x_j}^sh)^2dx\nonumber\\
&&\quad+\frac{1}{2}\int_{\RR^M}\Big[-\varphi_t
-\sum_{k=1}^M\partial_{x_k}(A^{-1}E_k\varphi)+A^{-1}(2C-|D|)\varphi-A^{-1}(2A+|E|)\overline{\varphi}\nonumber\\
&&\quad
-\sum_{k=1}^M|\del_{x_k}(A^{-1}B\varphi)|\Big](\del_t\del_{x_j}^sh)^2dx\nonumber\\
&&\quad+\frac{1}{2}\int_{\RR^M}\Big[A^{-1}(2B-2|H|-|E|)\overline{\varphi}-\del_t(A^{-1}B\varphi)-\varphi A^{-1}|D|-\sum_{k=1}^M|\del_{x_k}(A^{-1}B\varphi)|\Big]|\nabla\del_{x_j}^s h|^2dx
\nonumber\\
&&\leq \int_{\RR^M}(\varphi\del_{x_j}^sh_t+\overline{\varphi}\del_{x_j}^sh)g_sdx.
\end{eqnarray}

Firstly, we estimate the right hand side of (\ref{TT001s}). Upon (\ref{E3-14rr0000}) and Cauchy's inequality, it holds
\begin{eqnarray*}\label{gsf1}
\int_{\RR^M}\varphi\del_{x_j}^s\Big(A^{-1}f\Big)\del_t\del_{x_j}^shdx
\leq\frac{1}{2}\int_{\RR^M}\varphi\Big((\del_{x_j}^s(A^{-1}f))^2+(\del_t\del_{x_j}^sh)^2\Big)dx,
\end{eqnarray*}
and
\begin{eqnarray*}
&&\sum\limits_{\substack{s_1+s_2=s\\1\leq s_1\leq s\\0\leq s_2\leq s-1}}
\int_{\RR^M}\dbinom{s_2}{s}\varphi\del_{x_j}^{s_1}(A^{-1}B)\Delta \del_{x_j}^{s_2}h\del_t\del_{x_j}^shdx\\
&&\quad\lesssim
\sum\limits_{\substack{s_1+s_2=s\\1\leq s_1\leq s\\0\leq s_2\leq s-1}}
\int_{\RR^M}\varphi|\del_{x_j}^{s_1}(A^{-1}B)|\Big((\Delta \del_{x_j}^{s_2}h)^2+(\del_t\del_{x_j}^sh)^2\Big)dx,\nonumber\\
&&\sum\limits_{\substack{s_1+s_2=s\\1\leq s_1\leq s\\0\leq s_2\leq s-1}}
\int_{\RR^M}\dbinom{s_2}{s}\varphi\del_{x_j}^{s_1}(A^{-1}C)\del_t\del_{x_j}^{s_2}h\del_t\del_{x_j}^shdx\\
&&\quad\lesssim
\sum\limits_{\substack{s_1+s_2=s\\1\leq s_1\leq s\\0\leq s_2\leq s-1}}
\int_{\RR^M}\varphi|\del_{x_j}^{s_1}(A^{-1}C)|\Big((\del_t\del_{x_j}^{s_2}h)^2+(\del_t\del_{x_j}^sh)^2\Big)dx,\\
&&\sum_{k=1}^M\sum\limits_{\substack{s_1+s_2=s\\1\leq s_1\leq s\\0\leq s_2\leq s-1}}
\int_{\RR^M}\dbinom{s_2}{s}\varphi\del_{x_j}^{s_1}(A^{-1}D_k)\partial_{x_k}\del_{x_j}^{s_2}h\del_t\del_{x_j}^shdx\\
&&\quad\lesssim
\sum_{k=1}^M\sum\limits_{\substack{s_1+s_2=s\\1\leq s_1\leq s\\0\leq s_2\leq s-1}}
\int_{\RR^M}\varphi|\del_{x_j}^{s_1}(A^{-1}D_k)|\Big((\partial_{x_k}\del_{x_j}^{s_2}h)^2+(\del_t\del_{x_j}^sh)^2\Big)dx,\nonumber\\
&&\sum_{k=1}^M\sum\limits_{\substack{s_1+s_2=s\\1\leq s_1\leq s\\0\leq s_2\leq s-1}}
\int_{\RR^M}\dbinom{s_2}{s}\varphi\del_{x_j}^{s_1}(A^{-1}E_k)\partial_{x_k}\del_t\del_{x_j}^{s_2}h\del_t\del_{x_j}^shdx\\
&&\quad\lesssim
\sum_{k=1}^M\sum\limits_{\substack{s_1+s_2=s\\1\leq s_1\leq s\\0\leq s_2\leq s-1}}
\int_{\RR^M}\varphi|\del_{x_j}^{s_1}(A^{-1}E_k)|\Big((\partial_{x_k}\del_t\del_{x_j}^{s_2}h)^2+(\del_t\del_{x_j}^sh)^2\Big)dx,\nonumber\\
&&\sum_{k,i=1}^M\sum\limits_{\substack{s_1+s_2=s\\1\leq s_1\leq s\\0\leq s_2\leq s-1}}
\int_{\RR^M}\dbinom{s_2}{s}\varphi\del_{x_j}^{s_1}(A^{-1}H_{ki})\partial_{x_k}\partial_{x_i}\del_{x_j}^{s_2}h\del_t\del_{x_j}^shdx\\
&&\quad\lesssim
\sum_{k,i=1}^M\sum\limits_{\substack{s_1+s_2=s\\1\leq s_1\leq s\\0\leq s_2\leq s-1}}
\int_{\RR^M}\varphi|\del_{x_j}^{s_1}(A^{-1}H_{ki})|\Big((\partial_{x_k}\partial_{x_i}\del_{x_j}^{s_2}h)^2+(\del_t\del_{x_j}^sh)^2\Big)dx,
\end{eqnarray*}
and
\begin{eqnarray*}
\int_{\RR^M}\overline{\varphi}\del_{x_j}^s\Big(A^{-1}f\Big)\del_{x_j}^shdx
\leq\frac{1}{2}\int_{\RR^M}\overline{\varphi}\Big((\del_{x_j}^s(A^{-1}f))^2+(\del_{x_j}^sh)^2\Big),
\end{eqnarray*}
\begin{eqnarray*}
&&\sum\limits_{\substack{s_1+s_2=s\\1\leq s_1\leq s\\0\leq s_2\leq s-1}}
\int_{\RR^M}\dbinom{s_2}{s}\overline{\varphi}\del_{x_j}^{s_1}(A^{-1}B)\Delta \del_{x_j}^{s_2}h\del_{x_j}^shdx\\
&&\quad\lesssim
\sum\limits_{\substack{s_1+s_2=s\\1\leq s_1\leq s\\0\leq s_2\leq s-1}}
\int_{\RR^M}\overline{\varphi}|\del_{x_j}^{s_1}(A^{-1}B)|\Big((\Delta \del_{x_j}^{s_2}h)^2+(\del_{x_j}^sh)^2\Big)dx,
\nonumber\\
&&\sum\limits_{\substack{s_1+s_2=s\\1\leq s_1\leq s\\0\leq s_2\leq s-1}}
\int_{\RR^M}\dbinom{s_2}{s}\overline{\varphi}\del_{x_j}^{s_1}(A^{-1}C)\del_t\del_{x_j}^{s_2}h\del_{x_j}^shdx\\
&&\quad\lesssim
\sum\limits_{\substack{s_1+s_2=s\\1\leq s_1\leq s\\0\leq s_2\leq s-1}}
\int_{\RR^M}\overline{\varphi}|\del_{x_j}^{s_1}(A^{-1}C)|\Big((\del_t\del_{x_j}^{s_2}h)^2+(\del_{x_j}^sh)^2\Big)dx,
\nonumber\\
&&\sum_{k=1}^M\sum\limits_{\substack{s_1+s_2=s\\1\leq s_1\leq s\\0\leq s_2\leq s-1}}
\int_{\RR^M}\dbinom{s_2}{s}\overline{\varphi}\del_{x_j}^{s_1}(A^{-1}D_k)\partial_{x_k}\del_{x_j}^{s_2}h\del_{x_j}^shdx\\
&&\quad\lesssim
\sum_{k=1}^M\sum\limits_{\substack{s_1+s_2=s\\1\leq s_1\leq s\\0\leq s_2\leq s-1}}
\int_{\RR^M}\overline{\varphi}|\del_{x_j}^{s_1}(A^{-1}D_k)|\Big((\partial_{x_k}\del_{x_j}^{s_2}h)^2+(\del_{x_j}^sh)^2\Big)dx,
\nonumber\\
&&\sum_{k=1}^M\sum\limits_{\substack{s_1+s_2=s\\1\leq s_1\leq s\\0\leq s_2\leq s-1}}
\int_{\RR^M}\dbinom{s_2}{s}\overline{\varphi}\del_{x_j}^{s_1}(A^{-1}E_k)\partial_{x_k}\del_t\del_{x_j}^{s_2}h\del_{x_j}^shdx\\
&&\quad\lesssim
\sum_{k=1}^M\sum\limits_{\substack{s_1+s_2=s\\1\leq s_1\leq s\\0\leq s_2\leq s-1}}
\int_{\RR^M}\overline{\varphi}|\del_{x_j}^{s_1}(A^{-1}E_k)|\Big((\del_{x_k}\del_t\del_{x_j}^{s_2}h)^2+(\del_{x_j}^sh)^2\Big)dx,
\nonumber\\
&&\sum_{k,i=1}^M\sum\limits_{\substack{s_1+s_2=s\\1\leq s_1\leq s\\0\leq s_2\leq s-1}}
\int_{\RR^M}\dbinom{s_2}{s}\overline{\varphi}\del_{x_j}^{s_1}(A^{-1}H_{ki})\partial_{x_k}\partial_{x_i}\del_{x_j}^{s_2}h\del_{x_j}^shdx\\
&&\quad\lesssim
\sum_{k,i=1}^M\sum\limits_{\substack{s_1+s_2=s\\1\leq s_1\leq s\\0\leq s_2\leq s-1}}
\int_{\RR^M}\overline{\varphi}|\del_{x_j}^{s_1}(A^{-1}H_{ki})|\Big((\partial_{x_k}\partial_{x_i}\del_{x_j}^{s_2}h)^2+(\del_{x_j}^sh)^2\Big)dx,
\end{eqnarray*}
upon above estimates,  we sum up (\ref{TT001s}) from $j=1$ to $j=M$, then we can reduce it into
\begin{eqnarray}\label{TT001sc1}
&&\frac{1}{2}\frac{d}{dt}\sum_{j=1}^M\int_{\RR^M}\Big[
\Big(-\overline{\varphi}_t+A^{-1}(C-A)\overline{\varphi}-\sum_{k=1}^M\partial_{x_k}(A^{-1}E_k\overline{\varphi})\Big)(\del_{x_j}^sh)^2
+\varphi A^{-1}(B-|H|)|\nabla\del_{x_j}^s h|^2\nonumber\\
&&\quad
+(\varphi-\overline{\varphi})(\del_{x_j}^s\del_th)^2
\Big]dx
+\frac{1}{2}\sum_{j=1}^M\int_{\RR^M}\widetilde{A}_1(t,x)(\del_{x_j}^sh)^2dx
+\frac{1}{2}\sum_{j=1}^M\int_{\RR^M}\widetilde{A}_2(t,x)(\del_t\del_{x_j}^sh)^2dx\nonumber\\
&&\quad+\frac{1}{2}\sum_{j=1}^M\int_{\RR^M}\widetilde{A}_3(t,x)|\nabla\del_{x_j}^s h|^2dx
\leq\frac{1}{2}\sum_{j=1}^M\int_{\RR^M}\Big(\varphi+\overline{\varphi}\Big)(\del_{x_j}^s(A^{-1}f))^2dx+\sum_{j=1}^MR_j,
\end{eqnarray}
where
\begin{eqnarray*}
\widetilde{A}_1(t,x)&:=&
\overline{\varphi}_{tt}-\Delta(A^{-1}B\overline{\varphi})-\pdt(A^{-1}C\overline{\varphi})+\sum_{k,i=1}^M\partial_{x_i}\partial_{x_k}(A^{-1}H_{ki}\overline{\varphi})
+\sum_{k=1}^M\partial_{x_k}\del_t(A^{-1}E_k\overline{\varphi})\\
&&-\sum_{k=1}^M\partial_{x_k}(A^{-1}D_k\overline{\varphi})
-2\overline{\varphi}\Big({1\over2}
+\sum\limits_{\substack{s_1+s_2=s\\1\leq s_1\leq s\\0\leq s_2\leq s-1}}|\del_{x_j}^{s_1}(A^{-1}B)|
+\sum\limits_{\substack{s_1+s_2=s\\1\leq s_1\leq s\\0\leq s_2\leq s-1}}|\del_{x_j}^{s_1}(A^{-1}C)|\\
&&+\sum_{k=1}^M\sum\limits_{\substack{s_1+s_2=s\\1\leq s_1\leq s\\0\leq s_2\leq s-1}}|\del_{x_j}^{s_1}(A^{-1}D_k)|
+\sum_{k=1}^M\sum\limits_{\substack{s_1+s_2=s\\1\leq s_1\leq s\\0\leq s_2\leq s-1}}|\del_{x_j}^{s_1}(A^{-1}E_k)|\\
&&\quad+\sum_{k,i=1}^M\sum\limits_{\substack{s_1+s_2=s\\1\leq s_1\leq s\\0\leq s_2\leq s-1}}|\del_{x_j}^{s_1}(A^{-1}H_{ki})|\Big),\\
\widetilde{A}_2(t,x)&:=&
-\varphi_t-\sum_{k=1}^M\partial_{x_k}(A^{-1}E_k\varphi)+A^{-1}(2C-|D|)\varphi-A^{-1}(2A+|E|)\overline{\varphi}
-\sum_{k=1}^M|\del_{x_k}(A^{-1}B\varphi)|\nonumber\\
&&
-2\varphi\Big({1\over2}+\sum\limits_{\substack{s_1+s_2=s\\1\leq s_1\leq s\\0\leq s_2\leq s-1}}|\del_{x_j}^{s_1}(A^{-1}B)|
+\sum\limits_{\substack{s_1+s_2=s\\1\leq s_1\leq s\\0\leq s_2\leq s-1}}|\del_{x_j}^{s_1}(A^{-1}C)|
+\sum_{k=1}^M\sum\limits_{\substack{s_1+s_2=s\\1\leq s_1\leq s\\0\leq s_2\leq s-1}}|\del_{x_j}^{s_1}(A^{-1}D_k)|\nonumber\\
&&+\sum_{k=1}^M\sum\limits_{\substack{s_1+s_2=s\\1\leq s_1\leq s\\0\leq s_2\leq s-1}}|\del_{x_j}^{s_1}(A^{-1}E_k)|
+\sum_{k,i=1}^M\sum\limits_{\substack{s_1+s_2=s\\1\leq s_1\leq s\\0\leq s_2\leq s-1}}|\del_{x_j}^{s_1}(A^{-1}H_{ki})|\Big),\\
\widetilde{A}_3(t,x)&:=&
A^{-1}(2B-2|H|-|E|)\overline{\varphi}-\del_t(A^{-1}B\varphi)-\varphi A^{-1}|D|-\sum_{k=1}^M|\del_{x_k}(A^{-1}B\varphi)|,\\
R_j(t,x)&:=&\sum\limits_{\substack{s_1+s_2=s\\1\leq s_1\leq s\\0\leq s_2\leq s-1}}
\int_{\RR^M}\Big(\varphi+\overline{\varphi}\Big)|\del_{x_j}^{s_1}(A^{-1}B)|(\Delta\del_{x_j}^{s_2}h)^2dx\nonumber\\
&&+
\sum\limits_{\substack{s_1+s_2=s\\1\leq s_1\leq s\\0\leq s_2\leq s-1}}
\int_{\RR^M}\Big(\varphi+\overline{\varphi}\Big)|\del_{x_j}^{s_1}(A^{-1}C)|(\del_t\del_{x_j}^{s_2}h)^2dx\nonumber\\
&&+
\sum_{k=1}^M\sum\limits_{\substack{s_1+s_2=s\\1\leq s_1\leq s\\0\leq s_2\leq s-1}}
\int_{\RR^M}\Big(\varphi+\overline{\varphi}\Big)|\del_{x_j}^{s_1}(A^{-1}D_k)|(\del_{x_k}\del_{x_j}^{s_2}h)^2dx\nonumber\\
&&+
\sum_{k=1}^M\sum\limits_{\substack{s_1+s_2=s\\1\leq s_1\leq s\\0\leq s_2\leq s-1}}
\int_{\RR^M}\Big(\varphi+\overline{\varphi}\Big)|\del_{x_j}^{s_1}(A^{-1}E_k)|(\del_{x_k}\del_t\del_{x_j}^{s_2}h)^2dx\nonumber\\
&&+
\sum_{k,i=1}^M\sum\limits_{\substack{s_1+s_2=s\\1\leq s_1\leq s\\0\leq s_2\leq s-1}}
\int_{\RR^M}\Big(\varphi+\overline{\varphi}\Big)|\del_{x_j}^{s_1}(A^{-1}H_{ki})|(\partial_{x_k}\partial_{x_i}\del_{x_j}^{s_2}h)^2dx.
\end{eqnarray*}

Furthermore, by (\ref{NYN0-04}) and (\ref{NYN0-7}), it holds
\begin{eqnarray*}
&&A^{-1}\Big(-\pdt(A\overline{\varphi})+(C-A)\overline{\varphi}-\sum_{k=1}^M\partial_{x_k}(E_k\overline{\varphi})\Big)
+(A^{-1}A_t\overline{\varphi}+\sum_{k=1}^M\del_{x_k}A^{-1}E_k\overline{\varphi})\nonumber\\
&\geq&A^{-1}\Big(\Big(A(c-1)-A_t+C\Big)\overline{\varphi}-\sum_{k=1}^M\partial_{x_k}(E_k\overline{\varphi})\Big)+o(\eps)\\
&\geq&A^{-1}\Big(\sigma(c-1)-3\eps\Big)\overline{\varphi}+o(\eps)>0,
\end{eqnarray*}
thus it holds
\begin{eqnarray*}
\label{yw1-t}
&&\int_{\RR^M}\Big[(\varphi-\overline{\varphi})(\del_t\del_{x_j}^sh)^2
+\varphi A^{-1}(B-|H|)|\nabla\del_{x_j}^sh|^2\nonumber\\
&&\quad
+\Big(-\overline{\varphi}_t
+A^{-1}(C-A)\overline{\varphi}-\sum_{k=1}^M\partial_{x_k}(A^{-1}E_k\overline{\varphi})\Big)(\del_{x_j}^sh)^2
\Big]dx\nonumber\\
&&\gtrsim\int_{\RR^M}\Big[c\overline{\varphi}(\del_t\del_{x_j}^sh)^2
+\varphi(\sigma-{1\over2})|\nabla\del_{x_j}^sh|^2
+\Big(\overline{\varphi}\Big(\sigma(c-1)-3\eps\Big)+o(\eps)\Big)(\del_{x_j}^sh)^2
\Big]dx.~~~~~~
\end{eqnarray*}

Similarly, using (\ref{NYN0-2})-(\ref{NYN0-6}), there exists a positive constant $c_0$ such that
\begin{eqnarray*}
\widetilde{A}_1(t,x)&:=&
A^{-1}\Big(\pdt^2(A\overline{\varphi})-\Delta(B\overline{\varphi})
-\pdt(C\overline{\varphi})
+\sum_{k,i=1}^M\partial_{x_i}\partial_{x_k}(H_{ki}\overline{\varphi})
+\sum_{k=1}^M\del_t\partial_{x_k}(E_k\overline{\varphi})\nonumber\\
&&\quad-\sum_{k=1}^M\partial_{x_k}(D_k\overline{\varphi})-\overline{\varphi}\Big)-A^{-1}\del_{t}^2A\overline{\varphi}-A^{-1}A_t\overline{\varphi}_t
-\sum_{k=1}^M\del_{x_k}^2( A^{-1})B\overline{\varphi}\nonumber\\
&&\quad-\sum_{k=1}^M\del_{x_k} A^{-1}\del_{x_k}( B\overline{\varphi})
-A^{-1}_tC\overline{\varphi}
+\sum_{k,i=1}^M\partial_{x_i}\partial_{x_k}A^{-1}H_{ki}\overline{\varphi}
+\sum_{k,i=1}^M\partial_{x_i}A^{-1}\partial_{x_k}(H_{ki}\overline{\varphi})\\
&&\quad
+\sum_{k,i=1}^M\partial_{x_k}A^{-1}\partial_{x_i}(H_{ki}\overline{\varphi})
+\sum_{k=1}^M\del_t\partial_{x_k}A^{-1}E_k\overline{\varphi}
+\sum_{k=1}^M\partial_{x_k}A^{-1}\del_t(E_k\overline{\varphi})\\
&&\quad
+\sum_{k=1}^M\del_tA^{-1}\partial_{x_k}(E_k\overline{\varphi})
-\sum_{k=1}^M\partial_{x_k}A^{-1}D_k\overline{\varphi}
+A^{-1}\overline{\varphi}\\
&&\quad-2\overline{\varphi}\Big({1\over2}
+\sum\limits_{\substack{s_1+s_2=s\\1\leq s_1\leq s\\0\leq s_2\leq s-1}}|\del_{x_j}^{s_1}(A^{-1}B)|
+\sum\limits_{\substack{s_1+s_2=s\\1\leq s_1\leq s\\0\leq s_2\leq s-1}}|\del_{x_j}^{s_1}(A^{-1}C)|\\
&&\quad+\sum_{k=1}^M\sum\limits_{\substack{s_1+s_2=s\\1\leq s_1\leq s\\0\leq s_2\leq s-1}}|\del_{x_j}^{s_1}(A^{-1}D_k)|
+\sum_{k=1}^M\sum\limits_{\substack{s_1+s_2=s\\1\leq s_1\leq s\\0\leq s_2\leq s-1}}|\del_{x_j}^{s_1}(A^{-1}E_k)|\\
&&\quad+\sum_{k,i=1}^M\sum\limits_{\substack{s_1+s_2=s\\1\leq s_1\leq s\\0\leq s_2\leq s-1}}|\del_{x_j}^{s_1}(A^{-1}H_{ki})|\Big)\\
&\geq&A^{-1}\Big(A\overline{\varphi}_{tt}-B\Delta\overline{\varphi}-C\overline{\varphi}_t-c_0\eps\overline{\varphi}\Big)+o(\eps)\nonumber\\
&\geq&\Big(c^2\sigma-c_0(\sigma+1)\eps\Big)\overline{\varphi}+o(\eps)>0,\\
\end{eqnarray*}
and
\begin{eqnarray*}
&&\widetilde{A}_2(t,x)\\
&:=&
A^{-1}\Big(-\del_t(A\varphi)
-\sum_{k=1}^M\partial_{x_k}(E_k\varphi)+(2C-|D|-1)\varphi-(2A+|E|)\overline{\varphi}
-\sum_{k=1}^M|\del_{x_k}(B\varphi)|
\Big)\nonumber\\
&&+A^{-1}A_t\varphi
-\sum_{k=1}^M\partial_{x_k}A^{-1}E_k\varphi
+A^{-1}\varphi-\sum_{k=1}^M|\del_{x_k}A^{-1}B\varphi|
-2\varphi\Big({1\over2}+\sum\limits_{\substack{s_1+s_2=s\\1\leq s_1\leq s\\0\leq s_2\leq s-1}}|\del_{x_j}^{s_1}(A^{-1}B)|\\
&&+\sum\limits_{\substack{s_1+s_2=s\\1\leq s_1\leq s\\0\leq s_2\leq s-1}}|\del_{x_j}^{s_1}(A^{-1}C)|
+\sum_{k=1}^M\sum\limits_{\substack{s_1+s_2=s\\1\leq s_1\leq s\\0\leq s_2\leq s-1}}|\del_{x_j}^{s_1}(A^{-1}D_k)|\nonumber\\
&&+\sum_{k=1}^M\sum\limits_{\substack{s_1+s_2=s\\1\leq s_1\leq s\\0\leq s_2\leq s-1}}|\del_{x_j}^{s_1}(A^{-1}E_k)|
+\sum_{k,i=1}^M\sum\limits_{\substack{s_1+s_2=s\\1\leq s_1\leq s\\0\leq s_2\leq s-1}}|\del_{x_j}^{s_1}(A^{-1}H_{ki})|\Big),\\
\\
&\geq&c(c\sigma-1-5\eps)\overline{\varphi}+o(\eps)>0,\quad for~~ c\sigma>1+5\eps,
\end{eqnarray*}
and
\begin{eqnarray*}
\widetilde{A}_3(t,x)&:=&
A^{-1}\Big((2B-2|H|-|E|)\overline{\varphi}-\del_t(B\varphi)-\varphi|D|-\sum_{k=1}^M|\del_{x_k}(B\varphi)|\Big)\\
&&
-A^{-1}_tB\varphi-\sum_{k=1}^M|\del_{x_k}A^{-1}B\varphi|\\
&\geq&(2\sigma+c^2\sigma-1-c_0\eps)\overline{\varphi}+o(\eps)>0,
\end{eqnarray*}
thus it holds
\begin{eqnarray*}\label{yw1-A1}
\frac{1}{2}\int_{\RR^M}\widetilde{A}_1(t,x)(\del_{x_j}^sh)^2dx
\geq\frac{1}{2}\int_{\RR^M}\Big[\Big(c^2\sigma-c_0(\sigma+1)\eps\Big)\overline{\varphi}+o(\eps)\Big](\del_{x_j}^sh)^2dx,\\
\frac{1}{2}\int_{\RR^M}\widetilde{A}_2(t,x)(\del_t\del_{x_j}^sh)^2dx
\geq\frac{1}{2}\int_{\RR^M}\Big[c(c\sigma-1-5\eps)\overline{\varphi}+o(\eps)\Big](\del_t\del_{x_j}^sh)^2dx,\\
\frac{1}{2}\int_{\RR^M}\widetilde{A}_3(t,x)|\nabla\del_{x_j}^s h|^2dx
\geq\frac{1}{2}\int_{\RR^M}\Big[\Big(2\sigma+c^2\sigma-1-c_0\eps\Big)\overline{\varphi}+o(\eps)\Big]|\nabla\del_{x_j}^s h|^2dx.
\end{eqnarray*}

Hence, upon above estimates of coefficients, we deduce \eqref{TT001sc1} into
\begin{eqnarray}
\label{yw1-AS}
{d\over dt}\sum_{j=1}^M\int_{\RR^M}\overline{\varphi}\Big((\del_t\del_{x_j}^sh)^2&+&|\nabla\del_{x_j}^s h|^2+(\del_{x_j}^sh)^2
\Big)dx\nonumber\\
&&\quad+C_{c,\sigma,\eps}\int_{\RR^M}\overline{\varphi}\Big((\del_t\del_{x_j}^sh)^2
+|\nabla\del_{x_j}^s h|^2+ (\del_{x_j}^sh)^2\Big)dx\nonumber\\
&\lesssim&\frac{1}{2}\sum_{j=1}^M\int_{\RR^M}\Big(\varphi+\overline{\varphi}\Big)(\del_{x_j}^s(A^{-1}f))^2dx+\sum_{j=1}^MR_j.
\end{eqnarray}

We notice that the term $R_j$ can be controlled by (\ref{wyw01-1}),
thus we can apply Gronwall's inequality to (\ref{yw1-AS}) to obtain
\begin{eqnarray*}
&&\sum_{j=1}^M\int_{\RR^M}\overline{\varphi}\Big((\del_t\del_{x_j}^sh)^2
+|\nabla\del_{x_j}^s h|^2+ (\del_{x_j}^sh)^2\Big)dx\\
&\lesssim& e^{-C_{c,\sigma,\eps}t}\sum_{j=1}^M\sum_{\theta=0}^s\Big[\int_{\RR^M}\Big((\del_{x_j}^{\theta}h(0,x))^2
+|\nabla\del_{x_j}^{\theta} h(0,x)|^2+ (\del_{x_j}^{\theta}h(0,x))^2\Big)dx\\
&&\quad+\int_0^{\infty}\int_{\RR^M}\Big(\varphi+\overline{\varphi}\Big)(\del_{x_j}^{\theta}f)^2dxdt\Big].
\end{eqnarray*}

\end{proof}

Directly deriven from lemma 2.3, we have the following result.
\begin{lemma}
Let $f\in \CC^1((0,\infty); H^s(\RR^M))$. Then the solution of linear wave equation (\ref{NYN0-1}) satisfies
\begin{eqnarray*}
\label{wyw01-0}
&&\sum_{j=1}^M\int_{\RR^M}\Big((\del_t\del_{x_j}^sh)^2
+|\nabla\del_{x_j}^s h|^2+(\del_{x_j}^sh)^2\Big)dx\nonumber\\
&\lesssim& e^{-\eps t}\sum_{j=1}^M\sum_{\theta=0}^{s}\Big[\int_{\RR^M}\Big((\del_{x_j}^{\theta}\del_th(0,x))^2
+|\nabla \del_{x_j}^{\theta}h(0,x)|^2+(\del_{x_j}^{\theta}h(0,x))^2\Big)dx\nonumber\\
&&+\int_0^{\infty}\int_{\RR^M}(\del_{x_j}^{\theta}f)^2dxdt\Big].
\end{eqnarray*}
\end{lemma}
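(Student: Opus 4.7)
The plan is to obtain Lemma 2.4 as a direct specialization of Lemma 2.3, in exact parallel to how Lemma 2.2 was deduced from Lemma 2.1. The weighted $H^s$ estimate of Lemma 2.3 is already the hard analytic work; what remains is merely to pick admissible weights $\varphi,\overline{\varphi}$ for which the weighted inequality collapses to the stated unweighted one.

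First I would set
\[
\varphi(t,x):=2e^{-ct},\qquad \overline{\varphi}(t,x):=e^{-ct},
\]
with a positive constant $c>1$ chosen so that the constant $C_{c,\sigma,\eps}$ produced by Lemma 2.3 satisfies $C_{c,\sigma,\eps}\geq c+\eps$. I would then verify the structural hypotheses \eqref{NYN0-5}--\eqref{NYN0-06}: indeed $\varphi_t+c\varphi=0$ and $\overline{\varphi}_t+c\overline{\varphi}=0$ with $\overline{\varphi}_{tt}=c^2\overline{\varphi}$, the pointwise comparison $c\overline{\varphi}=\tfrac12\varphi\leq\varphi$ is immediate, and both weights are spatially constant so the $\partial^s$-smallness conditions hold trivially. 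The bound \eqref{NYN0-06} becomes $e^{(c-C_{c,\sigma,\eps})t}\leq e^{-\eps t}$, which is exactly what the choice $C_{c,\sigma,\eps}\geq c+\eps$ delivers. This check is essentially identical to the one done in Lemma 2.2.

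Next I would invoke Lemma 2.3 with this pair of weights. Because $\overline{\varphi}$ is spatially constant, it factors out of the $x$-integral on the left-hand side and yields $e^{-ct}$ times the unweighted $H^{s+1}$-type quantity that appears in Lemma 2.4. Multiplying through by $e^{ct}$ converts the decay factor $e^{-C_{c,\sigma,\eps}t}$ on the right-hand side into $e^{(c-C_{c,\sigma,\eps})t}$, and \eqref{NYN0-06} upgrades this to the target rate $e^{-\eps t}$. For the forcing term, the pointwise bound $\varphi+\overline{\varphi}=3e^{-ct}\leq 3$ replaces the weighted space-time integral of $(\partial_{x_j}^{\theta}f)^2$ by the unweighted one, and $\overline{\varphi}(0,x)=1$ makes the initial-data integrals coincide with those stated in Lemma 2.4.

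The only delicate point, and the main obstacle, is the compatibility inequality $C_{c,\sigma,\eps}\geq c+\eps$. Recalling from the proof of Lemma 2.3 that
\[
C_{c,\sigma,\eps}=\min\!\Big\{c^2\sigma-c_0(\sigma+1)\eps,\; c(c\sigma-1-5\eps),\; 2\sigma+c^2\sigma-1-c_0\eps\Big\},
\]
this quantity grows with $\sigma$ while $c>1$ stays fixed, so taking $\sigma$ large enough and $\eps$ correspondingly small forces each of the three arguments of the minimum to exceed $c+\eps$. This selection is consistent with the standing assumptions \eqref{NYN0-2}--\eqref{NYN0-04}, after which assembling the estimates above gives the conclusion of Lemma 2.4.
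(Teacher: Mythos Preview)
Your proposal is correct and matches the paper's approach exactly: the paper states that Lemma 2.4 is ``directly deriven from lemma 2.3'' and offers no further argument, so your explicit specialization $\varphi=2e^{-ct}$, $\overline{\varphi}=e^{-ct}$ (the same weights used to pass from Lemma 2.1 to Lemma 2.2) together with the verification of \eqref{NYN0-06} is precisely the intended route. One small remark: rather than enlarging $\sigma$ with $c$ fixed, it is slightly more in keeping with the standing hypotheses (where $\sigma>1$ is given) to fix $\sigma$ and take $c$ large, since each of the three terms in $C_{c,\sigma,\eps}$ grows like $c^2\sigma$ and hence eventually dominates $c+\eps$; either choice yields the needed inequality.
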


Based on above results, the global existence of Sobolev regularity solution for the damped wave equation with variable coefficients (\ref{NYN0-1}) can be given.

\begin{proposition}
Let $f\in \CC^1((0,\infty); H^s(\RR^M))$. Assume that (\ref{NYN0-2})-(\ref{NYN0-4}) hold. The linear problem (\ref{NYN0-1}) admits a unique global solution
$$
h(t,x)\in\CC^1((0,\infty);H^{s}(\RR^M))\cap\CC((0,\infty);H^{s+1}(\RR^M)).
$$
Moreover, it satisfies
\bel{www0-1}
\aligned
\|h\|^2_{H^s}
&\lesssim e^{-\eps\tau}\Big(\|h_0\|^2_{H^s}+\|h_1\|^2_{H^{s-1}}+\|f\|_{H^s}^2\Big).
\endaligned
\ee
\end{proposition}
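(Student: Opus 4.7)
The plan is to combine the a priori estimates of Lemmas 2.1--2.4 with a standard local existence result and a continuation argument. Under the positivity and boundedness hypotheses (\ref{NYN0-2})--(\ref{NYN0-4}), equation (\ref{NYN0-1}) is a second-order linear strictly hyperbolic equation with smooth variable coefficients: the principal symbol $A(t,x)\tau^{2}-\big(B(t,x)|\xi|^{2}-\sum_{k,i}H_{ki}(t,x)\xi_{k}\xi_{i}\big)$ admits two real roots in $\tau$ separated uniformly away from zero for $|\xi|=1$, as already exploited in the proof of Lemma 2.1. Classical linear wave theory (cf. the book of Sogge \cite{Sog}) then yields, for Cauchy data $h_{0}\in H^{s+1}(\RR^{M})$, $h_{1}\in H^{s}(\RR^{M})$ and source $f\in\CC^{1}((0,\infty);H^{s}(\RR^{M}))$, a unique local solution
$$
h\in\CC^{1}([0,T^{*});H^{s}(\RR^{M}))\cap\CC([0,T^{*});H^{s+1}(\RR^{M}))
$$
for some maximal existence time $T^{*}\in(0,+\infty]$.

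Next, I would use Lemmas 2.2 and 2.4 to preclude finite-time blow-up. Combining these estimates and using the admissibility condition (\ref{NYN0-06}) to strip off the weight $\overline{\varphi}$ from the left-hand side, one obtains, as long as the local solution persists,
$$
\|h(t,\cdot)\|_{H^{s}}^{2}+\|\del_{t}h(t,\cdot)\|_{H^{s-1}}^{2}\lesssim e^{-\eps t}\Big(\|h_{0}\|_{H^{s}}^{2}+\|h_{1}\|_{H^{s-1}}^{2}+\|f\|_{H^{s}}^{2}\Big).
$$
Since the right-hand side is finite and uniformly bounded in $t$, the $H^{s}$ norm cannot blow up at any finite time, and the standard continuation principle for linear hyperbolic equations forces $T^{*}=+\infty$. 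This produces the global solution in the class $\CC^{1}((0,\infty);H^{s})\cap\CC((0,\infty);H^{s+1})$ together with the decay estimate (\ref{www0-1}).

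Uniqueness is immediate from Lemma 2.2 applied to the difference $h^{(1)}-h^{(2)}$ of two hypothetical solutions sharing the same Cauchy data and source. By linearity this difference solves the homogeneous version of (\ref{NYN0-1}) with zero initial data, and the weighted $\LL^{2}$ estimate then forces it to vanish identically on $[0,+\infty)$.

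The main obstacle I anticipate is not the global existence step itself --- which flows directly once the a priori estimates are in hand --- but rather verifying that the weighted estimates of Lemmas 2.1 and 2.3 transfer cleanly to the unweighted $H^{s}$ bound with the advertised exponential decay rate $e^{-\eps t}$. This relies crucially on the admissibility assumption (\ref{NYN0-06}), namely $\overline{\varphi}^{-1}e^{-C_{c,\sigma,\eps}t}\le e^{-\eps t}$, which converts the intermediate constant $C_{c,\sigma,\eps}$ produced by the weighted energy method into the uniform exponent $\eps$ appearing in (\ref{www0-1}). Some additional bookkeeping is also needed to justify that the commutator terms $g_{s}(t,x)$ arising from differentiating (\ref{NYN0-1}) by $\del_{x_{j}}^{s}$ are absorbable by Gronwall's inequality, which is guaranteed by the smallness assumption (\ref{NYN0-04}) on the derivatives of the coefficients.
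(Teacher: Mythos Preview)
Your proposal is correct and follows essentially the same strategy as the paper: local existence via the classical linear hyperbolic theory in Sogge \cite{Sog}, the a priori estimates of Lemmas 2.1--2.4 to obtain the decay bound (\ref{www0-1}), a continuation argument to globalize, and uniqueness from the estimate applied to the difference of two solutions. The only cosmetic difference is that the paper recasts (\ref{NYN0-1}) as a first-order system $\frac{d}{dt}\phi+\mathcal{A}\phi=G$ before invoking the fixed-point iteration, whereas you appeal directly to strict hyperbolicity of the second-order equation; these are equivalent routes to the same local well-posedness result.
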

\begin{proof}
The proof iis based on the standard fixed point iteration by following the proof process of theorem 3.2 given in page 18 of \cite{Sog}. We sketch the proof.
Let $\phi=(h,w)^T$ and $w=h_{t}$. Then the linear equation (\ref{NYN0-1}) is equivalent to
\begin{equation}\label{0E3-32}
\frac{d}{dt}\phi(t)+\mathcal{A}\phi(t)=G(t,x),~~t>0,
\end{equation}
with the initial data
$$
\phi_0:=\phi(0,x)=(h_0(x),h_1(x))^T,
$$
 the operators $\mathcal{A}$ is independent of $t$, it takes the form
$$
\aligned
\mathcal{A}=\left(
\begin{array}{cccc}
0&1\\
\overline{\Delta}_1&\overline{\Delta}_2
\end{array}
\right),
\endaligned
$$
where
$$
\aligned
\overline{\Delta}_1&:=-A^{-1}B\Delta+A^{-1}\sum_{k=1}^MD_k(t,x)\partial_{x_k}+A^{-1}\sum_{k=1}^M\sum_{i=1}^MH_{ki}(t,x)\partial_{x_k}\partial_{x_i},\\
\overline{\Delta}_2&:=A^{-1}C+\sum_{k=1}^ME_k(t,x)\partial_{x_k},
\endaligned
$$
and
$$
\aligned
G(t,x)=\left(
\begin{array}{cccc}
0\\
A^{-1}f
\end{array}
\right).
\endaligned
$$

We now consider the approximation problem
$$
\phi(t,x)=\phi_0-\int_0^{t}\Big(\mathcal{A}\phi(s,x)+G(s,x)\Big)ds
$$
has a Cauchy sequence $\{\phi_{j}\}_{j\in\mathbb{Z}^+}$ in $H^s(\RR^M)$, whose limit is $\phi(t,x)$ and it solves the linearized system (\ref{0E3-32}) in $(0,T]$, where $T$ denotes a positive constant.
Furthermore, by means of the results of Lemma 2.1-2.4, it holds
$$
\aligned
\|h\|^2_{H^s}
&\lesssim e^{-\eps\tau}\Big(\|h_0\|^2_{H^s}+\|h_0\|^2_{H^{s-1}}+\|f\|_{H^s}^2\Big),
\endaligned
$$
thus the constructed local solution $\phi(t,x)$ can be extended to the global solution in time.  

To see the uniqueness, let $\phi_1$ and $\phi_2$ are two solutions of (\ref{0E3-32}) with the same data, then $\phi:=\phi_1-\phi_2$ admits zero Cauchy data, and
$$
\frac{d}{d\tau}\phi(t)+\mathcal{A}\phi(t)=0,\quad with~~\phi=(h,h_t)^T.
$$
Therefore, we can apply (\ref{www0-1}) to derive $\phi\equiv0$.
This completes the proof.

\end{proof}

\section{The linearized problem}\setcounter{equation}{0}

The timelike minimal surface equation (\ref{E1-1}) is equivalent to
\bel{am0-0}
\Fcal(u)=0,
\ee
where $\Fcal(u)$ is given in (\ref{n1-0}).
Let $w(t,x)$ be the perturbation,  we set
$$
u(t,x)=u_s(x)+w(t,x),
$$
where $u_s(x)$ is given in (\ref{E01-2R1}).
Inserting it into (\ref{am0-0}), it leads to the nonlinear perturbation equation
\begin{eqnarray}
\label{G3-1}
\Lcal w&:=&\Big(1+|A+\nabla w|^2\Big)w_{tt}-\Big(1+|A+\nabla w|^2-w_t^2\Big)\triangle w-\frac{1}{2}w_t\partial_t\Big((\nabla w+2A)\cdot\nabla w\Big)\nonumber\\
&&\quad+\frac{1}{2}\sum_{k=1}^M\Big(\partial_{x_k}w+A_k\Big)\partial_{x_k}\Big((\nabla w+2A)\cdot\nabla w-w_t^2\Big)=0,
\end{eqnarray}
with the small initial data
\bel{G3-a1}
\aligned
w(0,x)&:=w_0(x)=u_0(x)-u_s(x),\\
w_t(0,x)&:=w_1(x)=u_1(x),
\endaligned
\ee
and the vanishing boundary condition
\bel{G3-a2}
\lim_{|x|\rightarrow+\infty}w(t,x)=0.
\ee

Let constant $p>{1\over2}$ and the parameter $\eps$ be a small positive constant.
We linearize nonlinear equation (\ref{E0001-1}) at the initial approximation function $w^{(0)}(t,x)$, where we assume
\begin{eqnarray}\label{P0-1}
&&w^{(0)}(t,x):=t^2(t^4+1)^{-p}w^{(0)}(x),\quad with\quad A\cdot \nabla w^{(0)}(x)>0,\quad A\neq\textbf{0},\quad \forall x\in\RR^M,\\
\label{P0-2}
&&w^{(0)}(0,x)=\del_tw^{(0)}(t,x)|_{t=0}=0,\quad and \quad w^{(0)}(x)\in C^{\infty}(\RR^M),\\
\label{P0-3}
&&\|\del^sw^{(0)}\|_{\LL^{\infty}}\lesssim\eps,~~\forall s\in\{0,1,2,3,\ldots\}
\end{eqnarray}
 to get a nonhomogeneous linear damped wave equation with variable coefficients
\bel{E0002-1}
\aligned
\Lcal^{(0)}_{w^{(0)}}h^{(1)}:=A_1(t,x)h^{(1)}_{tt}&-A_2(t,x)\triangle h^{(1)}+\sum_{k=1}^M\sum_{i=1}^MA_{3ki}(t,x)\del_{x_i}\del_{x_k}h^{(1)}+A_4(t,x)h^{(1)}_t\\
&\quad+A_5(t,x)\cdot\nabla h^{(1)}+A_6(t,x)\cdot\nabla h^{(1)}_t=E^{(0)}(t,x),
\endaligned
\ee
where coefficients of it are
\begin{eqnarray*}
A_1(t,x)&:=&1+|A+t^2(t^4+1)^{-p}\nabla w^{(0)}|^2,\\
A_2(t,x)&:=&1+|A+t^2(t^4+1)^{-p}\nabla w^{(0)}|^2-4t^2(1+(1-2p)t^4)^2(t^4+1)^{-2(p+1)}(w^{(0)})^2,\\
A_{3ki}(t,x)&:=&\Big(t^2(t^4+1)^{-p}\del_{x_k}w^{(0)}+A_k\Big)\Big(t^2(t^4+1)^{-p}\nabla w^{(0)}+A_i\Big),\\
A_4(t,x)&:=&4t(t^4+1)^{-p-1}\Big((2p-1)t^4-1\Big)\Big[\Big(A+t^2(t^4+1)^{-p}\nabla w^{(0)}\Big)\cdot\nabla w^{(0)}\\
&&\quad
-t^2(t^4+1)^{-p}w^{(0)}\triangle w^{(0)}\Big],\\
A_5(t,x)&:=&t^2(t^4+1)^{-p}\Big[{1\over2}\nabla\Big(t^2(t^4+1)^{-p}|\nabla w^{(0)}|^2-4(t^4+1)^{-p-2}(1+(1-2p)t^4)^2(w^{(0)})^2\\
&&\quad+2A\cdot\nabla w^{(0)}\Big)
+\sum_{k=1}^M\Big(t^2(t^4+1)^{-p}\del_{x_k}w^{(0)}+A_k\Big)\nabla\del_{x_k}w^{(0)}\\
&&\quad+2\Big(t^2(t^4+1)^{-p}\nabla w^{(0)}+A\Big)
\Big(t^{-2}(t^4+1)^p(t^2(t^4+1)^{-p})''w^{(0)}-\triangle w^{(0)}\Big)\Big],\\
A_6(t,x)&:=&4t((2p-1)t^4-1)(t^4+1)^{-p-1}w^{(0)}\Big(t^2(t^4+1)^{-p}\nabla w^{(0)}+A\Big).
\end{eqnarray*}

By the assumption of (\ref{P0-3}), it holds
\begin{eqnarray}\label{i0-1}
&&\|A_1\|_{\LL^{\infty}}\sim 1+|A|^2+O(\eps),\quad \|A_2\|_{\LL^{\infty}}\sim 1+|A|^2+O(\eps),\quad\|A_3\|_{\LL^{\infty}}\sim |A|^2+O(\eps),~~~\\
\label{i0-2}
&& \|A_4\|_{\LL^{\infty}}\sim O(\eps)\quad \|A_5\|_{\LL^{\infty}}\sim O(\eps),\quad \|A_6\|_{\LL^{\infty}}\sim O(\eps),
\end{eqnarray}
and the leading term in $A_4(t,x)$ is
\bel{i0-3}
2t(t^4+1)^{-p-1}\Big((2p-1)t^4-1\Big)A\cdot\nabla w^{(0)}>0,\quad for ~~t>T_p:=(2p-1)^{-{1\over4}},
\ee
from it, one can see the coefficient $A_4(t,x)>0$, and the linear equation $(\ref{E0002-1})$ is dissipative for the time $t>T_p:=(2p-1)^{-{1\over4}}$.

\subsection{The first aproximation step}

We introduce a family of smooth operators in the smooth bounded domain. We refer to \cite{Alin,AP} for more details.
{Let $\chi\in\CC_0^{\infty}(\RR^M)$ such that $\chi=1$ in $\Big\{x\in\RR^+ \Big| |x|\leq{1\over2}\Big\}$, otherwise, $\chi\equiv0$. We follow Proposition 1.6 of \cite[p. 83]{AP} or page 72 of \cite{K} to define
the smooth operator $\Pi^{(1)}_{\theta_m}$  by
$$
\Pi^{(1)}_{\theta_m}U=\sum_{m}\chi({|x|\over\theta_m})U(t,x),
$$
and the smooth operator $\Pi^{(2)}_{\theta_m'}$  by
$$
\Pi^{(2)}_{\theta_m'}U=\sum_m\chi({t\over\theta_m'})U(t,x),
$$
we define
$$
\Pi_{\theta_m,\theta_m'}:=\Pi^{(1)}_{\theta_m}\Pi^{(2)}_{\theta_m'},
$$
then one can verify
\begin{eqnarray}\label{AH1}
&&\|\Pi_{\theta_m,\theta_m'}\del_t^jU\|_{H^{s_1}}\leq C\theta_m^{(s_1-s_2)_+}(\theta'_m)^{(j-j')_+}\|\del_t^{j'}U\|_{H^{s_2}},~~\forall s_1,~s_2\geq0,\\
&&\|\Pi_{\theta_m,\theta_m'}\del_t^jU-\del_t^jU\|_{H^{s_1}}\leq C\theta_m^{s_1-s_2}(\theta_m')^{j-j'}\|\del_t^{j'}U\|_{H^{s_2}},~~0\leq s_1\leq s_2,\nonumber
\end{eqnarray}
where $C$ is a positive constant and $(s_1-s_2)_+:=\max(0,s_1-s_2)$. The proof follows from
the proof given in \cite[p. 192]{Alin} or \cite{AP}.

In our iteration scheme, we set
$$
\theta_m=\theta_m'=N_m=N_0^m,\quad \forall m= 0,1,2,\ldots,
$$
where $N_0$ is a fixed positive constant, we denote it by $\Pi_{N_m}$ for convenience,
then by (\ref{AH1}), it holds
\begin{align}\label{AH2}
\|\Pi_{N_m}\del_t^{s_1}U\|_{H^{s_1}}\lesssim N_m^{2(s_1-s_2)}\|\del_{t}^{s_2}U\|_{H^{s_2}},\quad \forall s_1\geq s_2.
\end{align}

Let us consider the linear damped wave equation with variable coefficient derive by an external force as follows
\bel{Y1-1}
\aligned
&\Pi_{N_1}\Lcal^{(0)}_{w^{(0)}}h^{(1)}=\Pi_{N_1}E^{(0)}(t,x),\quad \forall (t,x)\in\RR^+\times\RR^M,\\
& h^{(1)}(0,x)=h^{(1)}_0,\quad h^{(1)}_{t}(0,x)=h^{(1)}_1,
\endaligned
\ee
where the external force $E^{(0)}(t,x)$ is related to the error term at the initial approximation function.

Furthermore, let $\sigma\in(1,1+|A|^2)$, then by the form of $A_i(t,x)$ with $i=1,2,\ldots,6$, it follows from  (\ref{i0-1})- (\ref{i0-3}) that
\bel{00NYN0-2}
\sigma_0>A_1(t,x)>\sigma>1,\quad A_2(t,x)>\sigma>1,\quad A_4(t,x)>0, \quad\forall (x,t)\in\RR^M\times\RR^+,
\ee
and
\bel{00NYN0-3}
|A_3|:=\sum_{k,i=1}^M|A_{3ki}|\leq\sigma\quad and \quad A_{3ki}>0,
\ee
and
\bel{00NYN0-4}
|A_5|:=\sum_{k=1}^M|A_{5k}|\sim\eps,\quad |A_6|:=\sum_{k=1}^M|A_{6k}|\sim\eps,
\ee
and
\bel{00NYN0-04}
\aligned
&\|\del^sA_1\|_{\LL^{\infty}}\sim\eps,\quad \|\del^sA_2\|_{\LL^{\infty}}\sim\eps,\quad \|\del^sA_4\|_{\LL^{\infty}}\sim\eps,\quad \|\del^sA_{3ki}\|_{\LL^{\infty}}\sim\eps,\\
&\|\del^sA_{5k}\|_{\LL^{\infty}}\sim\eps,\quad \|\del^sA_{6k}\|_{\LL^{\infty}}\sim\eps,\quad\forall s\in\{1,2,3,\ldots\},
\endaligned
\ee
which means that assumptions (\ref{NYN0-2})-(\ref{NYN0-4}) hold. Thus we can use proposition 2.1 to
obtain the global exsitence of solution for the linear equation (\ref{Y1-1}).
\begin{proposition}
Let $E^{(0)}\in \CC^1((0,\infty); H^s(\RR^M))$. The linear problem (\ref{NYN0-1}) admits a unique global solution
$$
h^{(1)}(t,x)\in\CC^1((0,+\infty);H^{s}(\Omega)).
$$
Moreover, it satisfies
\bel{0Aa1-0}
\aligned
\|h^{(1)}\|^2_{H^s}
&\lesssim e^{-\eps t}\Big(\|h^{(1)}_0\|^2_{H^s}+\|h^{(1)}_1\|^2_{H^{s-1}}+\|E^{(0)}\|_{H^s}^2\Big).
\endaligned
\ee
\end{proposition}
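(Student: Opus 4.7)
The plan is to reduce the statement to a direct application of Proposition 2.1. First I would identify the coefficient dictionary between the linearized operator $\Lcal^{(0)}_{w^{(0)}}$ in \eqref{E0002-1} and the abstract operator in \eqref{NYN0-1}: namely $A\leftrightarrow A_1$, $B\leftrightarrow A_2$, $C\leftrightarrow A_4$, $D_k\leftrightarrow A_{5k}$, $E_k\leftrightarrow A_{6k}$, and $H_{ki}\leftrightarrow A_{3ki}$. Under the standing hypotheses \eqref{P0-1}--\eqref{P0-3} on the initial approximation $w^{(0)}(t,x)$, the bounds \eqref{i0-1}--\eqref{i0-2} and the sign condition \eqref{i0-3} yield precisely \eqref{00NYN0-2}--\eqref{00NYN0-04}. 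In particular, the ellipticity thresholds $A_1,A_2>\sigma>1$ follow from $|A|\neq 0$ and the smallness of $\eps$, the symmetric matrix $(A_{3ki})$ is nonnegative with small trace, and $A_5,A_6=O(\eps)$ come from the prefactor $t^2(t^4+1)^{-p}$ multiplied by smooth bounded functions of $w^{(0)}$ and its derivatives.

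Next I would dispose of the smoothing operator $\Pi_{N_1}$. Since $\Pi_{N_1}$ is applied to the whole equation \eqref{Y1-1}, this is equivalent to solving \eqref{E0002-1} with source $\Pi_{N_1}E^{(0)}$ in place of $E^{(0)}$; by \eqref{AH1}--\eqref{AH2} the smoothed source lies in $\CC^1((0,\infty);H^s(\RR^M))$ with norm controlled by that of $E^{(0)}$. Thus Proposition 2.1 is directly applicable and produces a unique global solution
$$
h^{(1)}\in \CC^1((0,\infty);H^s(\RR^M))\cap \CC((0,\infty);H^{s+1}(\RR^M)),
$$
together with the weighted energy estimate \eqref{www0-1}; absorbing the $\Pi_{N_1}$ factor on the right-hand side yields the claimed bound \eqref{0Aa1-0} after replacing $\|f\|_{H^s}$ by $\|E^{(0)}\|_{H^s}$.

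The step I expect to require the most care is the \emph{sign} of the damping coefficient $A_4(t,x)$. From \eqref{i0-3}, the leading contribution to $A_4$ is positive only for $t>T_p=(2p-1)^{-1/4}$, so in the window $[0,T_p]$ the equation is not strictly dissipative and Proposition 2.1 does not apply as stated. To handle this I would split the time interval: on $[0,T_p]$ I would invoke the classical local-existence and energy-estimate machinery for symmetric-hyperbolic/quasilinear wave equations (the fixed-point argument from Theorem 3.2 of \cite{Sog} cited in the proof of Proposition 2.1) to propagate the $H^s$-regularity with a time-uniform bound; then I would restart the problem at $t=T_p$ with Cauchy data $(h^{(1)}(T_p,\cdot),h^{(1)}_t(T_p,\cdot))\in H^s\times H^{s-1}$ whose norm is controlled by the initial norm plus $\|E^{(0)}\|_{H^s}$. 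For $t>T_p$ the coefficient $A_4$ is strictly positive, \eqref{00NYN0-2}--\eqref{00NYN0-04} are in force, and Proposition 2.1 yields the exponentially weighted estimate. Concatenating the two regimes (absorbing the finite factor $e^{\eps T_p}$ into the $\lesssim$) produces \eqref{0Aa1-0} uniformly in $t\geq 0$.

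Finally, uniqueness is automatic: two solutions $h^{(1)}_a,h^{(1)}_b$ with identical data give $\phi=h^{(1)}_a-h^{(1)}_b$ solving the homogeneous smoothed equation with zero Cauchy data and zero source; applying \eqref{0Aa1-0} to $\phi$ forces $\phi\equiv 0$. This closes the argument, modulo the routine verification that each coefficient $A_i(t,x)$ has the requisite $\CC^\infty$ smoothness, which follows from \eqref{P0-3} and the smoothness of the algebraic expressions in $(t^2(t^4+1)^{-p})$.
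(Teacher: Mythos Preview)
Your proposal is correct and follows essentially the same route as the paper: verify that the coefficients $A_1,\dots,A_6$ of \eqref{E0002-1} satisfy the structural hypotheses \eqref{NYN0-2}--\eqref{NYN0-04} (this is the content of \eqref{00NYN0-2}--\eqref{00NYN0-04}), and then invoke Proposition~2.1 directly. The paper in fact gives no separate proof for Proposition~3.1 beyond the sentence preceding it; your write-up is more detailed, and your explicit treatment of the non-dissipative window $[0,T_p]$ via local existence from \cite{Sog} (which the paper alludes to only in the sketch in Section~1.3) is a welcome clarification rather than a departure.
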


\subsection{The general approximation step}

Let constant $0<\eps\ll 1$, we define

$$
\Bcal_{\eps}:=\{ w^{(i)}\in H^{s}(\RR^M):\quad \|w^{(i)}\|_{H^s}\leq \eps \},
$$
with the integer $2\leq i\leq m-1$.

Assume that the $m$-th approximation step of (\ref{G3-1}) is denoted by
$h^{(m)}(t,x)$ with $m=2,3,\ldots$, where we set
$$
h^{(m)}(t,x):=w^{(m)}(t,x)-w^{(m-1)}(t,x),
$$
then it holds
$$
w^{(m)}(t,x)=w^{(0)}(t,x)+h^{(1)}(t,x)+\sum_{i=2}^mh^{(i)}(t,x).
$$

We linearize the nonlinear equations (\ref{G3-1}) around $w^{(m-1)}(t,x)$ to get the following initial value problem
\bel{AYY1-1}
\left\{
\begin{array}{lll}
&\Pi_{N_m}\Lcal^{(m)}_{w^{(m-1)}}h^{(m)}=\Pi_{N_m}E^{(m-1)}(\tau,x),\quad \forall (t,x)\in\RR^+\times\RR^M,\\
& h^{(m)}(0,x)=h^{(m)}_0,\quad h^{(m)}_{\tau}(0,x)=h^{(m)}_1,
\end{array}
\right.
\ee
where the error term is
\bel{E4-10R2}
E^{(m-1)}:=\Lcal[w^{m-1}(t,x)]=\Rcal(h^{(m)}),
\ee
and
\bel{AYY1-1-0}
\aligned
\Rcal(h^{(m)})&:=\Lcal(w^{(m-1)}+h^{(m)})-\Lcal(u^{(m-1)})
-\Lcal^{(m)}_{w^{(m-1)}}h^{(m)},
\endaligned
\ee
which is also the nonlinear term in the approximation problem (\ref{G3-1}) at $w^{(m-1)}(t,x)$.

Similar to the process of getting proposition 3.1, we can
construct the $m$-th approximation solution.

\begin{proposition}
Let $E^{(m-1)}\in \CC^1((0,\infty); H^s(\RR^M))$.
Assume $w^{(m-1)}\in \Bcal_{\eps}$.  Then
the linearized problem (\ref{AYY1-1})
admits a unique global solution
$$
h^{(m)}(t,x)\in\CC^1((0,+\infty);H^s(\RR^M)),
$$
which satisfies
\bel{E4-10R1}
\|h^{(m)}\|^2_{H^s}
\lesssim e^{-\eps t}\Big(\|h^{(m)}_0\|^2_{H^s}+\|h^{(m)}_1\|^2_{H^{s-1}}+\|E^{(m-1)}\|_{H^s}^2\Big).
\ee
\end{proposition}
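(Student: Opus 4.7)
The plan is to reduce Proposition 3.2 to the general linear theory of Proposition 2.1 exactly as was done for Proposition 3.1, with the extra task of controlling the perturbation coming from $w^{(m-1)}$ rather than just $w^{(0)}$. First, I would write out the linearized operator $\Lcal^{(m)}_{w^{(m-1)}}$ explicitly by substituting $u=u_s+w^{(m-1)}+h^{(m)}$ into $\Lcal$ and collecting linear terms in $h^{(m)}$. The resulting equation has the same structural form as (\ref{NYN0-1}), with coefficients $\widetilde{A}_1,\ldots,\widetilde{A}_6$ that match $A_1,\ldots,A_6$ from Section 3 but with $t^2(t^4+1)^{-p}w^{(0)}$ replaced by $w^{(m-1)}=w^{(0)}+\sum_{i=1}^{m-1}h^{(i)}$.

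Next, I would verify that these coefficients satisfy the hypotheses (\ref{NYN0-2})--(\ref{NYN0-04}). Since $w^{(m-1)}\in\Bcal_\eps$, every contribution to the coefficients coming from $\sum_{i=1}^{m-1}h^{(i)}$ is of order $\eps$, so one can write each coefficient as the corresponding $A_j(t,x)$ from Section 3 plus an $O(\eps)$ perturbation. Thus the structure bounds in (\ref{i0-1})--(\ref{i0-2}) are preserved after possibly shrinking $\eps$: ellipticity $\widetilde{A}_1,\widetilde{A}_2>\sigma>1$ holds, the quadratic form $|\widetilde{A}_3|\leq\sigma$ with $\widetilde{A}_{3ki}>0$ holds, and the lower-order coefficients $\widetilde{A}_5,\widetilde{A}_6$ are $O(\eps)$. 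The smoothness assumption (\ref{00NYN0-04}) is secured by applying the smoothing operator $\Pi_{N_m}$ both to the equation's coefficients (implicitly, because $w^{(m-1)}$ has been built through smoothed iterates) and to the source term $E^{(m-1)}$; the estimate (\ref{AH2}) gives the necessary $H^{s}$ control.

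The main obstacle is verifying the damping positivity $\widetilde{A}_4(t,x)>0$ for $t>T_p$, which is what enables Proposition 2.1's exponential decay to kick in. For the first step this followed from (\ref{i0-3}) and the sign condition $A\cdot\nabla w^{(0)}(x)>0$ in (\ref{P0-1}). Here the same leading term
\[
2t(t^4+1)^{-p-1}\bigl((2p-1)t^4-1\bigr)A\cdot\nabla w^{(0)}
\]
is still present, and all additional contributions coming from $\sum_{i=1}^{m-1}h^{(i)}$ are $O(\eps)$ by the assumption $w^{(m-1)}\in\Bcal_\eps$. Thus for $\eps$ sufficiently small the leading term dominates on $\{t>T_p\}$, giving $\widetilde{A}_4>0$ there. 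On the compact interval $[0,T_p]$ the equation is a strictly hyperbolic linear wave equation with smooth coefficients, so standard local existence (e.g.\ via the energy method in Sogge \cite{Sog}) supplies the solution up to $t=T_p$ without dissipation.

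Finally, once the hypotheses of Proposition 2.1 are verified for the smoothed problem (\ref{AYY1-1}) on $\{t\geq T_p\}$ (with initial data taken at $t=T_p$ from the local solution), Proposition 2.1 gives a unique global solution $h^{(m)}\in\CC^1((0,\infty);H^s(\RR^M))$ together with the decay estimate
\[
\|h^{(m)}\|_{H^s}^2\lesssim e^{-\eps t}\bigl(\|h^{(m)}_0\|_{H^s}^2+\|h^{(m)}_1\|_{H^{s-1}}^2+\|E^{(m-1)}\|_{H^s}^2\bigr),
\]
which is exactly (\ref{E4-10R1}). Uniqueness is the standard argument: the difference of two solutions has vanishing Cauchy data and zero source, so (\ref{www0-1}) from Proposition 2.1 forces it to be identically zero.
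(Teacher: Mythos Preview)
Your proposal is correct and follows essentially the same approach as the paper: both arguments reduce to Proposition 3.1 by observing that, since $w^{(m-1)}=w^{(0)}+\sum_{i=1}^{m-1}h^{(i)}$ with each $h^{(i)}$ of size $O(\eps)$, the coefficients of $\Lcal^{(m)}_{w^{(m-1)}}$ are $O(\eps)$-perturbations of the coefficients $A_1,\ldots,A_6$ from the first step, so the structural hypotheses (\ref{00NYN0-2})--(\ref{00NYN0-04}) persist and Proposition 2.1 applies. Your write-up is in fact more explicit than the paper's (you spell out the damping positivity check and the local-in-time argument on $[0,T_p]$), but the underlying strategy is identical.
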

\begin{proof}
On one hand, we find the $m$-th $(m\geq2)$ approximation solution $w^{(m)}(t,x)$, which is equivalent to find $h^{(m)}(t,x)$ such that
\bel{A4-7}
w^{(m)}(t,x)=w^{(m-1)}(t,x)+h^{(m)}(t,x).
\ee
Substituting (\ref{A4-7}) into (\ref{G3-1}), it holds
$$
\Lcal(w^{(m)})=\Lcal(w^{(m-1)})+\Lcal^{(m)}_{w^{(m-1)}}h^{(m)}+\Rcal(h^{(m)}),
$$
then let
$$
\Lcal^{(m)}_{w^{(m-1)}}h^{(m)}=-\Lcal(w^{(m-1)})=-E^{(m-1)},
$$
which is a linear damped wave equation taking the form of (\ref{E0002-1}) by replacing $w^{(0)}$ with $w^{(m-1)}$, and the error term
$$
E^{(m-1)}:=\Lcal(w^{(m-1)})=\Rcal(h^{(m)}).
$$

On the other hand, by direct computation we find
\bel{AAE2-6}
\del^sw^{(m-1)}(t,x)=\del^s w^{(0)}(t,x)+\del^{s} h^{(1)}(t,x)+\sum_{i=2}^{m-1}\del^{s} h^{(i)}(t,x),\quad \forall s\in\NN,
\ee
where we use the symbol $\del^s$ to denote the $s$-th derivatives of time or spacial variables,
and for a sufficient small positive parameter $\eps$, it holds
$$
\|h^{(i)}\|_{H^s}\lesssim\eps,
$$
thus we can see that
$$
\del^sw^{(m-1)}(t,x)\sim \del^sw^{(0)}(t,x)+O(\eps),
$$
it means that the leading term of the $m$th approximation solution is the initial approximation function $w^{(0)}(t,x)$.
Thus there is the same structure between the linear system (\ref{Y1-1}) and the linear system of $m$th approximation solutions, and a similar assumption given in (\ref{00NYN0-2})-(\ref{00NYN0-4}) can be satisfied.
By means of the same arguments as in the proof of Proposition 3.1, we can show that the linear problem (\ref{AYY1-1}) admits a global solution $h^{(m)}(t,x)\in\CC^1((0,+\infty);H^s(\RR^M))$.
Meanwhile, (\ref{E4-10R1}) can be obtained. The proof is now complete.
\end{proof}


\section{The nonlinear problem}\setcounter{equation}{0}

In this section, our target is to prove that $w^{(\infty)}(t,x)$ is a global solution of the nonlinear equations (\ref{G3-1}). This is equivalent to show that the series
$\sum\limits_{i=1}^mh^{(i)}(t,x)$ is convergent. We now give the tame estimate of error term in each iteration scheme.
\begin{lemma}
 The error term verifies
\bel{E4-9R1}
\|\Pi_{N_m}E^{(m-1)}\|_{H^s}
=\|\Pi_{N_m}\Rcal(h^{(m)})\|_{H^s}
\lesssim N_m^{4s}\|h^{(m)}\|^2_{H^s},\quad for~s\geq1.
\ee
\end{lemma}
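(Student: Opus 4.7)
The plan is to expand $\Rcal(h^{(m)})$ via Taylor's formula, bound the result in $\LL^2$ by a Moser-type tame product estimate, and then invoke the smoothing inequality (\ref{AH2}) to absorb the loss of derivatives into a polynomial factor in $N_m$. Since $\Lcal^{(m)}_{w^{(m-1)}}$ is by construction the Fréchet derivative of $\Lcal$ at $w^{(m-1)}$, the remainder satisfies
$$\Rcal(h^{(m)}) = \int_0^1(1-\tau)\,\Lcal''\!\bigl(w^{(m-1)}+\tau h^{(m)}\bigr)(h^{(m)},h^{(m)})\,d\tau.$$
Reading off (\ref{G3-1}), $\Lcal(u)$ is a polynomial in $(u,\partial u,\partial^2 u)$ of degree at most three with at most two derivatives acting on any single factor, so $\Lcal''(u)(h,h)$ is a finite linear combination of monomials of the form
$$P_{\alpha_1\alpha_2}\!\bigl(A+\nabla w^{(m-1)}+\tau\nabla h^{(m)},\,w^{(m-1)}_t+\tau h^{(m)}_t\bigr)\,\partial^{\alpha_1}h^{(m)}\,\partial^{\alpha_2}h^{(m)},\quad |\alpha_1|,|\alpha_2|\leq 2,$$
with $P_{\alpha_1\alpha_2}$ smooth in its arguments.

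Next I would apply the Moser-type product estimate together with the Sobolev embedding $H^\sigma\hookrightarrow \LL^\infty$ (for $\sigma>M/2$) to each such monomial, placing the factor carrying the higher number of derivatives in $\LL^2$ and the other in $\LL^\infty$. The smooth coefficient $P_{\alpha_1\alpha_2}$ is bounded uniformly in $m$ thanks to $w^{(m-1)}\in\Bcal_{\eps}$ and (\ref{P0-3}), and so may be absorbed into the implicit constant. For $s$ sufficiently large (say $s\geq M/2+2$) this produces the crude quadratic bound
$$\|\Rcal(h^{(m)})\|_{\LL^2}\lesssim \|h^{(m)}\|^2_{H^s}.$$

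Finally, the smoothing estimate (\ref{AH2}) applied with $s_1=s$, $s_2=0$ in both the spatial and the temporal slots trades $s$ space and $s$ time derivatives for the polynomial factor $N_m^{4s}$, yielding
$$\|\Pi_{N_m}\Rcal(h^{(m)})\|_{H^s}\lesssim N_m^{4s}\|\Rcal(h^{(m)})\|_{\LL^2}\lesssim N_m^{4s}\|h^{(m)}\|^2_{H^s},$$
which is exactly (\ref{E4-9R1}).

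The main obstacle is the tame estimate step: since $\Lcal$ is quasilinear of second order, those monomials in $\Lcal''(u)(h,h)$ that carry the factor $\partial^2 h^{(m)}$ force one to place that factor in $\LL^2$ (consuming two derivatives) while controlling the other factor in $\LL^\infty$ via Sobolev embedding. Checking that this redistribution works for every member of the finite but sizable list of monomials produced by twice-differentiating the polynomial $\Lcal$, and verifying that the auxiliary coefficient polynomials $P_{\alpha_1\alpha_2}$ remain uniformly bounded across the iteration, constitutes the bulk of the verification.
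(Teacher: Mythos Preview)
Your approach is essentially the same as the paper's: both recognise that $\Rcal(h^{(m)})$ is (at least) quadratic in $h^{(m)}$ with at most second-order derivatives on each factor, bound it by product estimates, and then invoke the smoothing property (\ref{AH2}) to absorb the derivative loss into the factor $N_m^{4s}$. The only cosmetic difference is that the paper writes out $\Rcal(h^{(m)})$ explicitly as a polynomial in $\partial h^{(m)},\partial^2 h^{(m)}$ (formula (\ref{nnn0-1})) rather than via the Taylor integral, and then disposes of the cubic terms by the observation that $\|h^{(m)}\|_{H^s}^p\le\|h^{(m)}\|_{H^s}^2$ for $p\ge 2$ inside $\Bcal_\eps$, in place of your Moser-tame machinery.
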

\begin{proof}
We notice that the error term is
\begin{eqnarray}\label{nnn0-1}
\Rcal(h^{(m)})&:=&\Big(2A\cdot\nabla h^{(m)}+|\nabla h^{(m)}|^2\Big)h^{(m)}_{tt}-\Big(2A\cdot\nabla h^{(m)}+|\nabla h^{(m)}|^2-(h^{(m)}_t)^2\Big)\triangle h^{(m)}\nonumber\\
&&-\frac{1}{2}h^{(m)}_t\partial_t\Big((\nabla h^{(m)}+2A)\cdot\nabla h^{(m)}\Big)+\frac{1}{2}\sum_{k=1}^M\Big(\partial_{x_k}h^{(m)}+A_k\Big)\partial_{x_k}\Big(|\nabla h^{(m)}|^2-(h^{(m)}_t)^2\Big)\nonumber\\
&&\quad+\sum_{k=1}^M\partial_{x_k}h^{(m)}\Big(A\cdot\nabla \partial_{x_k}h^{(m)}\Big),
\end{eqnarray}
and the highest order of derivatives on $x$ and $t$ of it
is $4$ and $2$, respectively. Since the solution of (\ref{AYY1-1}) should be constructed in $\Bcal_{\eps}$, it holds
$$
\|h^{(m)}\|^p_{H^s}\leq \|h^{(m)}\|^2_{H^s},\quad for\quad p\geq2.
$$
Thus we apply Cauchy's inequality and (\ref{nnn0-1}) to estimate each term in $\Rcal(h^{(m)})$, it holds
$$
\|\Pi_{N_m}\Rcal(h^{(m)})\|_{H^s}\lesssim N_m^{4s}\|h^{(m)}\|^2_{H^s},\quad for~s\geq1.
$$

\end{proof}

We now show the convergence of iteration scheme.
For any $s>1$, let $1\leq\bar{k}< k_0\leq k\leq s$ and
$$
\aligned
&k_m:=\bar{k}+\frac{k-\bar{k}}{2^m},\\
&\alpha_{m+1}:=k_m-k_{m+1}=\frac{k-\bar{k}}{2^{m+1}},
\endaligned
$$
which gives that
\bel{EX1-1}
k_0>k_1>\ldots>k_m>k_{m+1}>\ldots.
\ee
\begin{proposition}
The nonlinear problem (\ref{G3-1}) with the initial data (\ref{G3-a1}) and the boundary condition
(\ref{G3-a2})
admits a unique global Sobolev regularity solution
$$
w^{(\infty)}(t,x)=w^{(0)}(t,x)+\sum_{m=1}^{\infty}h^{(m)}(t,x)+e^{-t^2}  w_0(x)+ te^{-t}w_1(x).
$$
\end{proposition}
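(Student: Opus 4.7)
The plan is to combine the linear dissipative estimate of Proposition 3.2 with the tame quadratic estimate of Lemma 4.1 in a Nash--Moser iteration scheme, and to handle the nonzero Cauchy data through the auxiliary function $\tilde w_0(t,x):=e^{-t^2}w_0(x)+te^{-t}w_1(x)$. First I observe that $\tilde w_0(0,x)=w_0(x)$, $\partial_t\tilde w_0(0,x)=w_1(x)$, and both $\tilde w_0$ and its time-derivatives decay exponentially as $t\to+\infty$. Since $w^{(0)}(0,x)=\partial_tw^{(0)}(0,x)=0$ by (\ref{P0-2}), the decomposition $w^{(\infty)}=w^{(0)}+\tilde w_0+\sum_{m\geq1}h^{(m)}$ satisfies the prescribed Cauchy data provided each $h^{(m)}$ is solved with zero initial data; the only new contribution is an extra forcing $\Lcal(w^{(0)}+\tilde w_0)$ that must be absorbed into $E^{(0)}$, which is still $O(\eps)$ in any $H^s$ because of the cut-off support of $w_0,w_1$ and the choice (\ref{P0-1}).

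Second, I would fix $\bar k<k_0\leq k\leq s$ and the decreasing sequence $k_m=\bar k+(k-\bar k)/2^m$ from (\ref{EX1-1}), and try to prove by induction on $m$ the inductive bound
\begin{equation*}
\|h^{(m)}\|_{H^{k_m}}\leq N_m^{-\beta}
\end{equation*}
for a positive exponent $\beta$ to be chosen. The base case $m=1$ follows from Proposition 3.1, provided $N_0$ is large and $\eps$ is small so that the right-hand side is bounded by $N_1^{-\beta}$. For the inductive step, Proposition 3.2 applied to (\ref{AYY1-1}) with zero initial data gives
\begin{equation*}
\|h^{(m+1)}\|_{H^{k_{m+1}}}^2\lesssim e^{-\eps t}\,\|\Pi_{N_{m+1}}E^{(m)}\|_{H^{k_{m+1}}}^2,
\end{equation*}
and combining the tame estimate (\ref{E4-9R1}) with the smoothing inequality (\ref{AH2}) yields
\begin{equation*}
\|\Pi_{N_{m+1}}E^{(m)}\|_{H^{k_{m+1}}}\lesssim N_{m+1}^{4k_{m+1}}\|h^{(m)}\|_{H^{k_m}}^2\leq N_{m+1}^{4k_{m+1}}N_m^{-2\beta}.
\end{equation*}
Recalling $N_m=N_0^m$, the closing condition reduces to the arithmetic inequality $4k_{m+1}-2\beta/\log_{N_0}(N_{m+1}/N_m)\leq -\beta$, which can be satisfied by taking $\beta>4k$ and $N_0$ large because $k_m$ stays bounded by $k_0$.

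Third, once the inductive bound holds, the series $\sum_{m\geq1}h^{(m)}$ converges geometrically in $\CC^1((0,\infty);H^{\bar k}(\RR^M))$, so $w^{(\infty)}$ is well-defined and lies in that space. Using the exponential factor $e^{-\eps t}$ in Proposition 3.2 and the decay of $\tilde w_0$, I would then conclude that $\|w^{(\infty)}(t,\cdot)-u_s\|_{H^{\bar k}}\to 0$ as $t\to+\infty$, which is the asymptotic stability claim of Theorem 1.1. To verify that $w^{(\infty)}$ actually solves $\Lcal(w)=0$, I would estimate
\begin{equation*}
\|\Lcal(w^{(m)})\|_{H^{\bar k-\delta}}=\|E^{(m)}\|_{H^{\bar k-\delta}}\leq\|\Pi_{N_{m+1}}E^{(m)}\|_{H^{\bar k-\delta}}+\|(I-\Pi_{N_{m+1}})E^{(m)}\|_{H^{\bar k-\delta}},
\end{equation*}
where the first term is controlled by the inductive bound and the second by the second inequality of (\ref{AH1}) with $s_1<s_2$, which produces a gain $N_{m+1}^{\bar k-\delta-k_m}\to0$. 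Uniqueness follows from the $L^2$--type energy estimate of Lemma 2.2 applied to the difference of two solutions.

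The main obstacle will be the balance between the loss of derivatives at each linearization (the factor $N_{m+1}^{4k_{m+1}}$ in the tame estimate) and the quadratic small gain $\|h^{(m)}\|_{H^{k_m}}^2$. The shrinking sequence $k_m\downarrow \bar k$ is crucial: it allows the estimates to be iterated at progressively weaker norms, so that the fixed loss $4k_{m+1}$ is ultimately beaten by the geometric gain $N_m^{-2\beta}$. A secondary technical issue is verifying that the coefficients of $\Lcal^{(m)}_{w^{(m-1)}}$ continue to satisfy the structural assumptions (\ref{00NYN0-2})--(\ref{00NYN0-04}) throughout the iteration; this follows because each $\del^s w^{(m-1)}$ is within $O(\eps)$ of $\del^s w^{(0)}$ in $L^\infty$ by (\ref{AAE2-6}) and the inductive bound, so the perturbative conditions on $A_1,\dots,A_6$ remain valid uniformly in $m$.
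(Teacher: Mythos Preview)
Your overall architecture matches the paper's: reduce to zero Cauchy data via the auxiliary function $e^{-t^2}w_0+te^{-t}w_1$, iterate the linear estimate of Proposition~3.2 against the quadratic tame bound of Lemma~4.1, and check that $w^{(m-1)}$ stays in $\Bcal_\eps$ so that the coefficient hypotheses (\ref{00NYN0-2})--(\ref{00NYN0-04}) persist. The gap is in your inductive ansatz $\|h^{(m)}\|_{H^{k_m}}\le N_m^{-\beta}$ and the ``closing condition'' you wrote for it. With $N_m=N_0^m$ the requirement $N_{m+1}^{4k_{m+1}}N_m^{-2\beta}\le N_{m+1}^{-\beta}$ becomes, after taking $\log_{N_0}$,
\[
(m+1)\,4k_{m+1}\le \beta(m-1),
\]
which fails for $m=1$ (and for every finite $m$ the implicit constant from $\lesssim$ also has to be absorbed). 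Your displayed inequality $4k_{m+1}-2\beta\le-\beta$ is not what the algebra gives; you have silently replaced the exponent $(m+1)$ on $N_{m+1}$ and $m$ on $N_m$ by $1$. A merely exponential-in-$m$ bound like $N_m^{-\beta}$ cannot beat the polynomial loss $N_{m+1}^{4k_{m+1}}$ when the smoothing scale grows only geometrically.

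The paper closes the loop by exploiting the genuinely quadratic (Newton-type) gain: it posits $\|\Pi_{N_m}E^{(m)}\|_{H^{k_{m-1}}}<\eps^{2^m}$ and $\|\Pi_{N_m}h^{(m)}\|_{H^{k_{m-1}}}<\eps^{2^{m-1}}$, then iterates
\[
\|\Pi_{N_m}E^{(m)}\|_{H^{k_m}}\lesssim N_m^{4}\bigl(\|\Pi_{N_{m-1}}E^{(m-1)}\|_{H^{k_{m-1}}}\bigr)^{2}\lesssim\cdots\lesssim\bigl(N_0^{64+k-\bar k}\|\Pi_{N_1}E^{(0)}\|_{H^{\bar k}}\bigr)^{2^m},
\]
so that a single smallness condition $N_0^{64+k-\bar k}\|\Pi_{N_1}E^{(0)}\|<\eps^2$ forces the whole tower to collapse. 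The double-exponential decay $\eps^{2^m}$ is what makes the accumulated losses $\prod N_j^{O(1)}$ irrelevant. Replace your ansatz by this one (or, equivalently, switch to a super-exponential smoothing scale $N_{m+1}=N_m^{\gamma}$ with $1<\gamma<2$, in which case your $N_m^{-\beta}$ bound \emph{would} close); the rest of your outline then goes through exactly as in the paper.
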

\begin{proof}

The proof is based on the induction.
For convenience, we first deal with the case of zero initial data, that is, $w_0(x)=0$ and $w_1(x)=0$. After that, we discuss the case $w_0(x)\not\equiv0$ and $w_1(x)\not\equiv0$.
Note that $N_m=N_0^m$ with $N_0>1$. For all $m=1,2,\ldots$, we claim that there exists a sufficient small positive constant $\eps$ such that
\bel{E4-12}
\aligned
&\|\Pi_{N_m}h^{(m)}\|_{H^{k_{m-1}}}<\eps^{2^{m-1}},\\
&\|\Pi_{N_m}E^{(m)}\|_{H^{k_{m-1}}}<\eps^{2^{m}},\\
&w^{(m)}\in\Bcal_{\eps}.
\endaligned
\ee

For the case of $m=1$, by (\ref{E4-10R1}), letting $0<\eps_0<N_0^{-(64+k-\bar{k})}\eps^2\ll1$, it holds
$$
\aligned
\|\Pi_{N_1}h^{(1)}\|_{H^{k_0}}&\lesssim N_1^{k_0-1}\|h^{(1)}\|_{H^1}\\
&\lesssim  N_1^{k_0-1}\|\Pi_{N_1}E^{(0)}\|_{H^1}\\
&\lesssim N_1^{2(k_0-1)}\|\Pi_{N_1}E^{(0)}\|_{H^{k_0}}\\
&<\eps_0<\eps^2.
\endaligned
$$
Moreover, by (\ref{E4-9R1}) and the above estimate,
$$
\|\Pi_{N_1}E^{(1)}\|_{H^{k_0}}\lesssim
\|\Pi_{N_1}\Rcal(h^{(1)})\|_{H^1}\lesssim  N_1^{4k_0}\|h^{(1)}\|^2_{H^{1}}<\eps^2,
$$
and
$$
\|w^{(1)}\|_{H^1}\lesssim\|w^{(1)}\|_{H^{k_0}}\lesssim\|w^{(0)}\|_{H^{k_0}}+\|h^{(1)}\|_{H^{k_0}}\lesssim
\eps,
$$
which means that $w^{(1)}\in\Bcal_{\eps}$.

Assume that the case of $m-1$ holds, that is,
\bel{E4-13}
\aligned
&\|\Pi_{N_{m-1}}h^{(m-1)}\|_{H^{k_{m-2}}}<\eps^{2^{m-2}},\\
&\|\Pi_{N_{m-1}}E^{(m-1)}\|_{H^{k_{m-2}}}<\eps^{2^{m-1}},\\
&w^{(m-1)}\in\Bcal_{\eps}.
\endaligned
\ee
Then we prove that the case of $m$ holds. Upon (\ref{E4-10R1}) and the second inequality of (\ref{E4-13}), we derive
\bel{E4-14R1}
\aligned
\|\Pi_{N_m}h^{(m)}\|_{H^{k_{m-1}}}&\lesssim N_m^{k_{m-1}-1}\|h^{(m)}\|_{H^1}\\
&\lesssim N_m^{k_{m-1}-1}\|\Pi_{N_m}E^{(m-1)}\|_{H^{1}}\\
&\lesssim N_m^{2(k_{m-1}-1)}\|\Pi_{N_m}E^{(m-1)}\|_{H^{k_{m-1}}}\\
&<\eps^{2^{m-2}},
\endaligned
\ee
which combined with (\ref{E4-9R1})-(\ref{EX1-1}) yields
\bel{E4-14}
\aligned
\|\Pi_{N_m}E^{(m)}\|_{H^{k_{m}}}
&\lesssim  N_m^{4}\|h^{(m)}\|^2_{H^{1}}\\
&\lesssim  N_{m}^{4}\Big(\|\Pi_{N_m}E^{(m-1)}\|_{H^{1}}\Big)^2\\
&\lesssim  N_{m}^{4}\Big(\|\Pi_{N_{m-1}}E^{(m-1)}\|_{H^{k_{m-1}}}\Big)^2\\
&\lesssim N_0^{(4+\alpha_{m+1})m+2(4+\alpha_{m+2})(m-1)}\Big(\|\Pi_{N_{m-2}}E^{(m-2)}\|_{H^{k_{m+2}}}\Big)^{2^2}\\
&\lesssim \ldots,\\
&\lesssim \Big( N_0^{64+k-\bar{k}}\|\Pi_{N_1}E^{(0)}\|_{H^{k_{2m}}}\Big)^{2^m}.
\endaligned
\ee

We choose a sufficiently small positive constant $\eps_0$ such that
$$
0< N_0^{64+k-\bar{k}}\|\Pi_{N_1}E^{(0)}\|_{H^{\bar{k}}}<\eps^2.
$$
Thus, by (\ref{E4-14}) we have
$$
\|\Pi_{N_m}E^{(m)}\|_{H^{k_{m}}}<\eps^{2^{m}},
$$
so, the error term goes to $0$ as $m\rightarrow\infty$, that is,
$$
\lim_{m\rightarrow+\infty}\|\Pi_{N_m}E^{(m)}\|_{H^{k_{m}}}=0.
$$

On the other hand, note that $N_m=N_0^m$, by (\ref{E4-13})-(\ref{E4-14R1}). It follows that
$$
\|w^{(m)}\|_{H^{k_{m}}}\lesssim \|w^{(m-1)}\|_{H^{k_{m}}}+\|h^{(m)}\|_{H^{k_{m}}}\lesssim\eps.
$$
This means that $w^{(m)}\in\Bcal_{\eps}$. Hence we conclude that (\ref{E4-12}) holds.

Therefore, the nonlinear equation (\ref{G3-1}) with the zero initial data and the boundary condition
(\ref{G3-a2})
admits a global Sobolev solution
$$
w^{(\infty)}(t,x)=w^{(0)}(t,x)+\sum_{m=1}^{\infty}h^{(m)}(t,x),
$$
and we use (\ref{P0-2}) to get
$$
w^{(0)}(0,x)=\del_tw^{(0)}(t,x)|_{t=0}=0.
$$

Next, we discuss the case of small non-zero initial data.

We introduce the auxiliary function
$$
\overline{w}(t,x)=w(t,x)-e^{-t^2}w_0(x)- te^{-t}w_1(x),\quad \forall x\in\RR^M.
$$
Thus, the initial data reduces to
$$
\overline{w}(0,x)=0,\quad \del_{t}\overline{w}(0,x)=0,
$$
and equations (\ref{E1-1}) are transformed into equations of $\overline{w}$.

Thus, we can follow the above iteration scheme to construct a global Sobolev solution
$\overline{w}$.
Furthermore, the global Sobolev solution of equations (\ref{E1-1}) with a small non-zero initial data takes the form
$\overline{w}(t,x)+ e^{-t^2} w_0(x)+ te^{-t}w_1(x)$, and this solution is uniqueness due to the uniqueness of each iteration step $h^{(m)}(t,x)$.
This completes the proof.
\end{proof}

\textbf{Acknowledgments.}
This work is supported by NSFC No 11771359, No 12161006, Guangxi Natural Science Foundation No 2021JJG110002 and the special foundation for Guangxi Ba Gui Scholars.\\




\begin{thebibliography}{xx}
{\small
\bibitem{Alin}  {\small
S. Alinhac, Existence d'ondes de rar\'{e}faction pour des syst$\grave{e}$mes quasi-lin\'{e}aires hyperboliques multidimensionnels.
Comm. Partial Differential Equations 14 (1989), no. 2, 173-230.


\bibitem{AP}
S. Alinhac, P. G\'{e}rard, {\it Pseudo-differential Operators and the Nash-Moser Theorem},
Graduate Studies in Mathematics, vol. 82, American Mathematical Society, Providence, RI, 2007.


\bibitem{Ba}
B.M. Barbashov, V.V. Nesterenko, A.M. Chervyakov, General solutions of nonlinear equations in the geometric theory of the relativistic string. Comm. Math. Phys. 84 (1982) 471-481.


\bibitem{BPM}
H.Bahouri, G.Perelman, A.Marachli,
Blow up dynamics for the hyperbolic vanishing mean curvature flow of surfaces asymptotic to Simons cone. J. Eur. Math. Soc. 23 (2021) 3801-3887.


\bibitem{BGG}
E. Bombieri, E. De Giorgi, E. Giusti, Minimal cones and the Bernstein problem. Invent. Math. 7 (1969) 243-268.



\bibitem{hop}
J. Hoppe, Some classical solutions of relativistic membrane equations in 4-space-time dimensions. Phys.
Lett. B 329 (1994) 10-14 .


\bibitem{Egg}
J.Eggers, M.A. Fontelos, The role of self-similarity in singularities of partial
differential equations. Nonlinearity. 22 (2009) R1-R44

\bibitem{Hop1}
J. Eggers, J. Hoppe, Singularity formation for timelike extremal hypersurfaces. Physics Letters B. 680 (2009) 274-278.


\bibitem{Hop2}
J. Eggers, J. Hoppe, M. Hynek, N. Suramlishvili, Singularities of relativistic membranes. Geom. Flows. 1 (2015) 17-33.




\bibitem{H}
L. H\"{o}rmander,  Implicit function theorems. Stanford Lecture notes, University, Stanford 1977



\bibitem{H1}
L. H\"{o}rmander, Lectures on nonlinear hyperbolic differential equations. Springer-Verlag, Berlin,
1997.



\bibitem{K}
S. Klainerman, Global existence for nonlinear wave equations. Commun. Pure Appl. Math. 33 (1980) 43-101.


\bibitem{Kong}
D.X. Kong, Q. Zhang and Q. Zhou, The Dynamics of Relativistic Strings Moving in the Minkowski Space. Comm. Math. Phys. 269 (2007) 153-174.



\bibitem{Lin}
H. Lindblad, A remark on global existence for small initial data of the minimal surface equation in Minkowskian space time. Proc. Amer. Math. Soc. 132 (2004) 1095-1102.


\bibitem{Moser}
J. Moser,  A rapidly converging iteration method and nonlinear partial differential equations I-II. \textit{Ann. Scuola Norm. Sup. Pisa.} \textbf{20}, (1966) 265-313, 499-535.


\bibitem{Mi}
T. Milnor, Entire timelike minimal surfaces in $E^{3,1}$. Michigan Math. J. 37 (1990)163-177.



\bibitem{Nash}
J. Nash, The embedding for Riemannian manifolds. \textit{Amer. Math.} \textbf{63}, (1956) 20-63.

\bibitem{tian}
L. Nguyen, G. Tian, On smoothness of timelike maximal cylinders in three-dimensional vacuum spacetimes. Classical Quantum Gravity. 30 (2013), no. 16, 165010, 26 pp.




\bibitem{R}
P. Rabinowitz, A rapid convergence method and a singular perturbation problem. \textit{Ann.
Inst. Henri Poincar\'e}. \textbf{1} (1984) 1-17.




\bibitem{Simon}
 J. Simons, Minimal varieties in Riemannian manifolds. Ann. of Math.
88 (1968) 62-105.



\bibitem{Sog}
C.D. Sogge,
{\sl Lectures on Nonlinear Wave Equations,}
Monographs in Analysis, vol. II, International Press, Boston.



\bibitem{W}
E. Witten, Singularities in string theory. ICM 2002. Vol I. 495-504.


\bibitem{Yan}
W.P. Yan, The motion of closed hypersurfaces in the central force field. J. Diff. Eqns. 261 (2016), 1973-2005.



\bibitem{Yan1}
W.P. Yan, Dynamical behavior near explicit self-similar blow up solutions
for the Born-Infeld equation. Nonlinearity. 32 (2019) 4682-4712.



\bibitem{Yan2}
W.P. Yan, Nonlinear stability of explicit self-similar solutions for the timelike extremal hypersurfaces in $\RR^{1+3}$.
Calc. Var. Partial Differential Equations.  59 (2020) No. 124, 40 pp.


\bibitem{YZ}
W.P. Yan, B.L. Zhang, Long time existence of solution for the bosonic membrane in the light cone gauge, J. Geometric. Anal. 31 (2021) 395-422.

\bibitem{YN}
W.P. Yan, Asymptotic stability of the timelike extremal hypersurfaces with the lightlike singularity in Minkowski space. Preprint.

\bibitem{Yang}
S.W. Yang, Global solutions of nonlinear wave equations in time dependent inhomogeneous media. Arch. Rational Mech. Anal. 209 (2013) 683-728.







}}
























\end{thebibliography}
\end{document}